\DeclareFontFamily{OT1}{rsfs}{}
\DeclareFontShape{OT1}{rsfs}{n}{it}{<-> rsfs10}{}
\DeclareMathAlphabet{\mathscr}{OT1}{rsfs}{n}{it}
\theoremstyle{plain}
\newtheorem{thm}{Theorem}[section]
\newtheorem*{thm*}{Theorem}
\newtheorem*{cor*}{Corollary}
\newtheorem*{defn*}{Definition}
\newtheorem{prop}[thm]{Proposition}
\newtheorem{lem}[thm]{Lemma}
\newtheorem{cor}[thm]{Corollary}
\newtheorem{claim}[thm]{Claim}
\newtheorem*{claim*}{Claim}
\theoremstyle{definition}
\newtheorem{defn}[thm]{Definition}
\newtheorem{ex}[thm]{Example}
\newtheorem{fact}[thm]{Fact}
\newtheorem{setting}[thm]{Setting}
\theoremstyle{remark}
\numberwithin{equation}{thm}
\def\Hom{\mathrm{Hom}}
\def\Ext{\mathrm{Ext}}
\def\a{\mathfrak a}
\def\e{\mathrm{e}}
\def\m{\mathfrak m}
\def\q{\mathfrak q}
\def\H{\mathrm{H}}
\newcommand{\rmg}{\mathrm{g}}
\newcommand{\calD}{\mathcal{D}}
\newcommand{\calM}{\mathcal{M}}
\newcommand{\fka}{\mathfrak{a}}
\newcommand{\fkb}{\mathfrak{b}}
\newcommand{\fkm}{\mathfrak{m}}
\newcommand{\fkp}{\mathfrak{p}}
\newcommand{\fkq}{\mathfrak{q}}
\newcommand{\fkM}{\mathfrak{M}}
\def\depth{\mathrm{depth}}
\def\Supp{\mathrm{Supp}}
\def\a_i{\underline {a_i}}
\def\Ass{\mathrm{Ass}}
\def\Assh{\mathrm{Assh}}
\def\Min{\mathrm{Min}}
\begin{document}
\setlength{\baselineskip}{10.9pt}

\title{ On  Hilbert coefficients  and sequentially Cohen-Macaulay rings }

\author[K. Ozeki]{Kazuho Ozeki}
\address{Department of Mathematical Sciences, Faculty of Science, Yamaguchi University, 1677-1 Yoshida, Yamaguchi 753-8512, Japan}
\email{ozeki@yamaguchi-u.ac.jp}

\author[H. L. Truong]{Hoang Le Truong}
\address{Institute of Mathematics, VAST, 18 Hoang Quoc Viet Road, 10307
Hanoi, Viet Nam}
\address{Thang Long Institute of Mathematics and Applied Sciences, Hanoi, Vietnam}
\email{hltruong@math.ac.vn\\
	truonghoangle@gmail.com}

\author[H. N.Yen]{Hoang Ngoc Yen}
\address{Institute of Mathematics, VAST, 18 Hoang Quoc Viet Road, 10307
Hanoi, Viet Nam}
\address{The Department of Mathematics, Thai Nguyen University of education.
20 Luong Ngoc Quyen Street, Thai Nguyen City, Thai Nguyen Province, Viet Nam.}
\email{hnyen91@gmail.com}

\thanks{{\it Key words and phrases:} Gorenstein, Cohen-Macaulay, sequentially Cohen-Macaulay, multiplicity, irreducible decompositions.
\endgraf
{\it 2020 Mathematics Subject Classification:}
13H10, 13A30, 13B22, 13H15 ; Secondary 13D45.\\
The first author was partially supported by Grant-in-Aid for Scientific Researches (C) in Japan (18K03241). The second author was partially supported by the Alexander von Humboldt Foundation and the Vietnam National Foundation for Science and Technology Development (NAFOSTED) under grant number 101.04-2019.309. The last author was partially supported by  Grant number  ICRTM02-2020.05,
awarded in the internal grant competition of International Center for Research and Postgraduate Training in Mathematics, Hanoi.
}

\date{}

\maketitle

\begin{abstract}
In this paper, we explore the relation between the index of reducibility and the Hilbert coefficients in local rings. Consequently, the main result of this study provides a characterization of a sequentially Cohen-Macaulay ring in terms of  its Hilbert coefficients for non-parameter ideals. As corollaries to the main theorem, we obtain characterizations of a Gorenstein/Cohen-Macaulay ring in terms of its Chern coefficients for parameter ideals.
\end{abstract}

\section{Introduction}

Throughout this paper, let $(R,\m)$ be a homomorphic image of a Cohen-Macaulay local ring with the infinite residue field $k$, $\dim R=d>0$, and $M$ a finitely generated $R$-module of dimension $s$. 
For an $\m$-primary ideal $I$ of $M$, it is well-known  that there are integers $\e_i(I;M)$, called the {\it Hilbert coefficients} of $M$ with respect to $I$ such that for $n \gg 0$
\begin{eqnarray*}
 \ell_R(M/{I^{n+1}}M)={\e}_0(I;M) \binom{n+s}{s}-{\e}_1(I;M) \binom{n+s-1}{s-1}+\cdots+(-1)^s {\e}_s(I;M).
\end{eqnarray*}
Here $\ell_R(N)$ denotes, for an $R$-module $N,$ the length of $N.$ In particular, the leading coefficient $\e_0(I)$ is said to be {\it the multiplicity} of $M$ with respect to $I$ and $\e_1(I)$, which Vasconselos (\cite{Vas08}) refers to as the {\it Chern number} of $M$ with respect to $I$. 
Now our motivation stems from the work of  \cite{Vas08}. Vasconcelos posed {\it the Vanishing Conjecture}:  $R$ is a Cohen-Macaulay local ring if and only if $\e_1(\q, R) = 0$ for some parameter ideal $\q$ of $R$.
It is shown that the relation between Cohen-Macaulayness and the Chern number of parameter ideals is quite surprising.  In \cite{Tru14}, motivated by some deep results of \cite{CGT13,TaT20 } and also by the fact that this is true for $R$ is unmixed as shown in \cite{GGHOPV10}, it was asked whether  the characterization of many classes of non-unmixed rings such as Buchsbaum rings, generalized Cohen-Macaulay rings, sequentially Cohen-Macaulay rings in terms of the Hilbert coefficients of {\it non-parameter ideals} of $R$. The aim of our paper is to continue this research direction. 
Concretely,  we will give characterizations of a sequentially Cohen-Macaulay ring in terms of its Hilbert coefficients with respect to certain non-parameter ideals (Theorem \ref{main1}). Recall that the notion of a sequentially Cohen-Macaulay module was first introduced  by Stanley (\cite{Sta96}) for the graded case.  In the local case, a ring $R$ is said to be a {\it sequentially Cohen-Macaulay ring} if there exists a filtration of ideals
$\calD : \fka_0=(0)\subseteq \fka_1\subsetneq \fka_2\subsetneq \cdots 
\subsetneq \fka_{\ell}=R$ such that $\textrm{ht } \fka_i < \textrm{ht } \fka_{i+1}$ and $\fka_{i+1}/\fka_{i}$ are Cohen-Macaulay for all $i = 0, 1, \ldots , \ell - 1$, where $\textrm{ht } \fka$ is the height of an ideal $\fka$ (\cite{Sch99}). Then $R$ is a Cohen-Macaulay ring if and only if $R$ is an unmixed sequentially Cohen-Macaulay ring. 
Therefore, as an immediate consequence of our main result, we get again a result which is slightly stronger than the main results in \cite{Tru17} (Theorem \ref{T6.6}). Furthermore, Theorem \ref{main1} allows us to get several interesting properties of the fiber cone of socle ideals (Theorem \ref{T1.2}). Finally, 
from this main result, we obtain characterizations of a  Cohen-Macaulay ring in terms of its irreducible multiplicity with respect to certain parameter ideals (Theorem \ref{T1.3}).

This paper is divided into four sections. In the next section we recall the notions of the dimension filtration,   distinguished parameter ideals following \cite{CuC07,Tru19}, and prove some preliminary results on the index of reducibility. In  Section 3, we explore the relation between the index of reducibility and the Hilbert coefficients. The last section is devoted to prove the main result and its consequences.

\section{Preliminary}

Throughout this paper let  $R$  be a commutative Noetherian local  ring with  maximal ideal $\m$. Assume that the residue field $k=R/\fkm$ is infinite and $\dim R=d$. Let $M$ be a finitely generated $R$-module of dimension $s >0$.  We put $r_j(M) = \ell(0:_{\H_{\m}^{j}(M)} \m)$, for all $j\in \mathbb{Z}$, and $\Assh_RM=\{\fkp \in \Supp_RM \mid \dim R/\fkp=s\}.$ Then $\Assh_RM \subseteq \Min_RM \subseteq \Ass_RM.$
Let $\Lambda (M)=\{\dim_R L \mid L \ \text{is an}\ R\text{-submodule of}\ M, L \ne (0)\}.$ We then have $$\Lambda (M)=\{\dim R/\fkp \mid \fkp \in\Ass_RM\}.$$
We put  $\ell=\sharp \Lambda (M)$ and number the elements  $\{d_i\}_{1\leq i\leq \ell}$ of $\Lambda (M)$  
so that  $0 \le d_1<d_2<\cdots<d_{\ell}=s.$
We denote by $\Ass_{i}(M)=\{\fkp\in\Ass M\mid \dim R/\fkp\le d_i\}$. Then because the base ring $R$ is Noetherian, 
for each $1\leq i\leq \ell$ the $R$-module $M$ contains 
the largest $R$-submodule $D_i$ with $\Ass D_i=\Ass_i(M)$. 
Therefore, letting $D_0=(0)$, we have the filtration 
$$\calD : D_0=(0)\subsetneq D_1\subsetneq D_2\subsetneq \cdots 
\subsetneq D_{\ell}=M$$
of $R$-submodules of $M$, which we call the dimension filtration of $M$. 
The notion of the dimension filtration was first given 
by P. Schenzel (\cite{Sch99}). 
Our notion of the dimension filtration is a little different from that of \cite{CuC07, Sch99},
but throughout  this paper let us utilize the above definition. It is standard to check that $\{D_j\}_{0\leq j\leq i}$ 
(resp. $\{D_j/D_i\}_{i\leq j\leq \ell}$) 
is the dimension filtration of $D_i$ (resp. $M/D_i$) 
for every $1\leq i\leq \ell$. We put   $C_i=D_i/D_{i-1}$ for $1 \le i \le \ell$. 

We note two characterizations of the dimension filtration. Let $(0)=\bigcap_{\fkp \in \Ass_RM}M(\fkp)$ be a primary decomposition
of $(0)$ in $M$, where $M(\fkp)$ is an $R$-submodule of $M$ 
with $\Ass_RM/M(\fkp)=\{ \fkp \}$ for each $\fkp \in \Ass_RM$. Then the submodule
$D_{\ell-1} =\bigcap\limits_{\fkp\in\Assh(M)}M(\fkp)$
is called the unmixed component of $M$.

\begin{fact}[{\cite[Proposition 2.2, Corollary 2.3]{Sch99}}]\label{d1}
	For all $i=1,\ldots,\ell$, we have 
	\begin{enumerate}
		\item[$(1)$] $D_i=\bigcap\limits_{\fkp \in \Ass_RM,\ \dim R/\fkp \geq d_{i+1}}M(\fkp)$,
				\item[$(2)$] $\Ass_RC_i=\{ \fkp \in \Ass_RM \mid \dim R/\fkp =d_i\}$,
		\item[$(3)$] $\Ass_RM/D_i=\{\fkp \in \Ass_RM \mid \dim R/\fkp 
		\geq d_{i+1} \}$, 
		\item[$(4)$] $\dim D_i=d_i$.
	\end{enumerate}
\end{fact}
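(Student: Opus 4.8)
The plan is to reduce all four assertions to one bookkeeping lemma about the fixed primary decomposition $(0)=\bigcap_{\fkp\in\Ass_RM}M(\fkp)$ and then read them off. For a subset $S\subseteq\Ass_RM$ write $L_S=\bigcap_{\fkp\notin S}M(\fkp)$, the intersection over the primes \emph{not} in $S$. First I would record that
$$\Ass_R L_S=S \qquad\text{and}\qquad \Ass_R(M/L_S)=\Ass_RM\setminus S.$$
Indeed, since $L_S\cap\bigcap_{\fkp\in S}M(\fkp)=\bigcap_{\fkp\in\Ass_RM}M(\fkp)=(0)$, the module $L_S$ embeds into $M/\bigcap_{\fkp\in S}M(\fkp)$, which in turn embeds into $\bigoplus_{\fkp\in S}M/M(\fkp)$, so $\Ass_R L_S\subseteq S$; likewise $M/L_S\hookrightarrow\bigoplus_{\fkp\notin S}M/M(\fkp)$ gives $\Ass_R(M/L_S)\subseteq\Ass_RM\setminus S$. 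Feeding these two inclusions into $\Ass_RM\subseteq\Ass_RL_S\cup\Ass_R(M/L_S)$ (from $0\to L_S\to M\to M/L_S\to0$) and using that $\Ass_RM$ is the disjoint union of $S$ and $\Ass_RM\setminus S$ forces equality in both. No irredundancy is needed.

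Next, the crucial combinatorial input is that, because $\Lambda(M)=\{\dim R/\fkp\mid\fkp\in\Ass_RM\}$, there is no prime in $\Ass_RM$ with $d_i<\dim R/\fkp<d_{i+1}$; hence for each $i$ the set $\Ass_RM$ splits as the disjoint union of $S_i=\{\fkp\mid\dim R/\fkp\le d_i\}=\Ass_i(M)$ and $T_i=\{\fkp\mid\dim R/\fkp\ge d_{i+1}\}$. To prove $(1)$ I would identify $D_i$ with $L_{S_i}=\bigcap_{\fkp\in T_i}M(\fkp)$. By the lemma $\Ass_RL_{S_i}=S_i=\Ass_i(M)$, so maximality of $D_i$ gives $L_{S_i}\subseteq D_i$. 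The reverse inclusion is the main obstacle, and I would handle it by a dimension count: from $\Ass_RD_i=\Ass_i(M)$ every associated prime of $D_i$ has $\dim R/\fkp\le d_i$, whence $\dim_RD_i\le d_i$; if $D_i\not\subseteq M(\fkp)$ for some $\fkp\in T_i$, the nonzero image of $D_i$ in the $\fkp$-coprimary module $M/M(\fkp)$ would be a quotient of $D_i$ with sole associated prime $\fkp$, forcing $d_{i+1}\le\dim R/\fkp\le\dim_RD_i\le d_i$, a contradiction. Hence $D_i\subseteq\bigcap_{\fkp\in T_i}M(\fkp)=L_{S_i}$, giving $(1)$.

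Assertion $(4)$ is then immediate: $\dim_RD_i=\max\{\dim R/\fkp\mid\fkp\in\Ass_RD_i\}=\max\{\dim R/\fkp\mid\fkp\in S_i\}=d_i$, the last equality because $d_i\in\Lambda(M)$ is realized by some associated prime lying in $S_i$. Assertion $(3)$ is exactly the second half of the lemma applied to $S=S_i$, since $\Ass_R(M/D_i)=\Ass_RM\setminus S_i=T_i$. Finally, for $(2)$ I would write $D_{i-1}=L_{S_{i-1}}=\bigcap_{\fkp\in T_{i-1}}M(\fkp)$ and note that $T_{i-1}=T_i\sqcup U_i$ with $U_i=\{\fkp\mid\dim R/\fkp=d_i\}$, so that $D_{i-1}=D_i\cap\bigcap_{\fkp\in U_i}M(\fkp)$. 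This realizes $C_i=D_i/D_{i-1}$ as a submodule of $M/\bigcap_{\fkp\in U_i}M(\fkp)\hookrightarrow\bigoplus_{\fkp\in U_i}M/M(\fkp)$, giving $\Ass_RC_i\subseteq U_i$; the reverse inclusion $U_i\subseteq\Ass_RC_i$ follows from $\Ass_RD_i\subseteq\Ass_RD_{i-1}\cup\Ass_RC_i$ applied to $0\to D_{i-1}\to D_i\to C_i\to0$, since $U_i=S_i\setminus S_{i-1}$. Thus $\Ass_RC_i=U_i=\{\fkp\mid\dim R/\fkp=d_i\}$, which is $(2)$.
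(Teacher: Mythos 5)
Your proof is correct, and it holds up at every step I checked. There is, however, nothing in the paper to compare it against line by line: the statement is quoted as a Fact with a citation to \cite{Sch99}*{Proposition 2.2, Corollary 2.3}, and no proof is given. What you have produced is a complete, self-contained argument, and it is in substance the primary-decomposition route Schenzel himself takes, but adapted to this paper's convention --- which matters, because the authors explicitly say their notion of the dimension filtration differs slightly from Schenzel's (here $D_i$ is the largest submodule with $\Ass_R D_i=\Ass_i(M)$, indexed by the jump values $d_i\in\Lambda(M)$, whereas Schenzel takes the largest submodule of dimension at most $i$ for every integer $i$). So a direct verification like yours is genuinely useful: the citation does not literally cover the paper's definition. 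Your bookkeeping lemma ($\Ass_R L_S=S$ and $\Ass_R(M/L_S)=\Ass_R M\setminus S$ for $L_S=\bigcap_{\fkp\notin S}M(\fkp)$) is proved cleanly from the two embeddings plus the containment $\Ass_R M\subseteq\Ass_R L_S\cup\Ass_R(M/L_S)$, and you are right that no irredundancy of the decomposition is needed. The one step that required a real idea --- the inclusion $D_i\subseteq L_{S_i}$ --- is handled correctly by the dimension count through the $\fkp$-coprimary quotient $M/M(\fkp)$, using that a module with $\Ass=\{\fkp\}$ has dimension $\dim R/\fkp$ and that quotients cannot raise dimension. Three small conventions you rely on and might state explicitly: for $i=\ell$ the intersection in $(1)$ is empty and equals $M$; for $i=1$ in $(2)$ you need $S_0=\emptyset$, so $L_{S_0}=(0)=D_0$; and your appeal to ``maximality of $D_i$'' uses that $D_i$ contains \emph{every} submodule $N$ with $\Ass_R N=\Ass_i(M)$, which is exactly what ``largest'' means in the paper's definition (guaranteed there by Noetherianness), so the appeal is legitimate as long as that reading is made explicit.
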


Now, let $\underline x=x_1,x_2, \ldots, x_s$ be a system of parameters of $M$  and $\q_j$ denote the ideal generated by $x_1,\ldots,x_j$ for all $j=1,\ldots,s$. Then $\underline x$ is said to be 
{\it distinguished}, if  
$  (x_j \mid d_{i} < j \le s) D_i=(0)$
for all $1\le i\le \ell$. A parameter ideal $\fkq$ of $M$ is called  
{\it distinguished}, if there exists a 
distinguished system $x_1,x_2, \ldots, x_s$ of parameters of $M$ such that $\fkq=(x_1,x_2, \ldots, x_s)$. Therefore, if $M$ is a Cohen-Macaulay $R$-module, every parameter ideal of $M$ is distinguished. 
Distinguished system of parameters always exist 
and if $x_1, x_2, \ldots, x_s$ is a distinguished system of parameters of $M$, so are $x_1^{n_1},x_2^{n_2}, \ldots, x_s^{n_s}$ for all integers $n_j \ge 1$ and $j=1,\ldots,s$.

\medskip

\begin{defn}[{cf. \cite[Definition 2.3]{Tru19}}]\rm
	A system $x_1,x_2,\ldots,x_t$ of elements of $R$ is called a {\it Goto sequence} on $M$, if   for all $0\le j\le t-1$ and $0\le i\le\ell$, we have
	\begin{enumerate}
		\item[$(1)$] $\Ass(C_i/\q_j C_i)\subseteq \Assh(C_i/\q_j C_i)\cup \{\fkm\}$,
		\item[$(2)$] $x_jD_i=0$ if $d_i<j\le d_{i+1}$,
		\item[$(3)$] $(0):_{M/\q_{j-1}M}x_j=\H^0_\m(M/\q_{j-1}M) \text{ and } x_j\not\in\fkp \text{ for all } \fkp\in\Ass(M/\q_{j-1}M)-\{\fkm\}$.
	\end{enumerate}
	
\end{defn}

 We now assume that $R$ is a homomorphic image of a Cohen-Macaulay local ring. Then
 the existence of Goto sequences is established in  (\cite[Corollary 2.8]{Tru19}).

	\medskip

\begin{fact}[{\cite[Lemma 2.4, Lemma 2.5]{Tru19}}]\label{property}
	Let $\underline x=x_1,x_2,\ldots,x_t$ form a Goto sequence on $M$. Then the following assertions hold true.
	\begin{enumerate}
		\item[$(1)$] $\underline x$ is a part of a system of parameters of $M$. 
		\item[$(2)$] If $t=s$ then $\underline x$ is a distinguished system of parameters of $M$.
		\item[$(3)$] Let $N$ denote the unmixed component of $\overline M=M/\q_{t-2}M$. If $\overline M/N$ is Cohen-Macaulay, so is also $M/D_{\ell-1}$. 
	\end{enumerate}
	
\end{fact}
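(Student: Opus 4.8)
The plan is to read off (1) and (2) directly from the defining conditions and to obtain (3) by a descending induction along the sequence. For (1) I would show that each $x_j$ is a parameter element on $M/\q_{j-1}M$: condition (3) gives $x_j\notin\fkp$ for all $\fkp\in\Ass(M/\q_{j-1}M)\setminus\{\m\}$, so whenever $\dim(M/\q_{j-1}M)>0$ it avoids every top-dimensional associated prime and $\dim(M/\q_jM)=\dim(M/\q_{j-1}M)-1$; inducting on $j$ gives $\dim(M/\q_jM)=s-j$ for $0\le j\le t$, which says $x_1,\dots,x_t$ is part of a system of parameters (completable to a full one since $k$ is infinite). For (2) assume $t=s$ and fix $i$: condition (2) applied with index $i+m$ yields $x_jD_{i+m}=0$ for $d_{i+m}<j\le d_{i+m+1}$, and since the dimension filtration is increasing, $D_i\subseteq D_{i+m}$, so the same $x_j$ already annihilates $D_i$ there. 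Letting $m$ run from $0$ to $\ell-i-1$, the half-open intervals $(d_{i+m},d_{i+m+1}]$ tile $(d_i,d_\ell]=(d_i,s]$, whence $x_jD_i=0$ for all $d_i<j\le s$; as $i$ was arbitrary, the sequence is distinguished.

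The substance is (3), which I would prove by descending induction. Put $L_j=M/\q_jM$ and write $U(L_j)$ for the unmixed component of $L_j$, so the relevant top quotient is $L_j/U(L_j)$; here $L_0/U(L_0)=M/D_{\ell-1}$ while $L_{t-2}/U(L_{t-2})=\overline M/N$. It suffices to prove, for each $1\le j\le t-2$, that Cohen--Macaulayness of $L_j/U(L_j)$ forces that of $L_{j-1}/U(L_{j-1})$, and then to chain the implications from $j=t-2$ down to $j=1$. The engine is a one-step lemma with $L=L_{j-1}$ and $x=x_j$, so that $L/xL=L_j$: since $\Ass(L/U(L))=\Assh(L)$ and condition (3) places $x$ outside each such prime, $x$ is a nonzerodivisor on $L/U(L)$, and the sequence $0\to L/U(L)\xrightarrow{\ x\ }L/U(L)\to L/(U(L)+xL)\to0$ shows that $L/U(L)$ is Cohen--Macaulay of dimension $\dim L$ if and only if $L/(U(L)+xL)$ is Cohen--Macaulay of dimension $\dim L-1$.

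The main obstacle is then to identify $L/(U(L)+xL)$ with the top quotient $(L/xL)/U(L/xL)=L_j/U(L_j)$, that is, to establish $U(L/xL)=(U(L)+xL)/xL$. This is exactly where the Goto hypotheses are consumed: condition (1) on the strata, $\Ass(C_i/\q_jC_i)\subseteq\Assh(C_i/\q_jC_i)\cup\{\m\}$, forbids cutting from creating any new associated prime other than possibly $\m$, which pins $U(L/xL)$ down as $(U(L)+xL)/xL$ up to a finite-length discrepancy, while conditions (2)--(3) guarantee the dimension filtration descends uniformly under the cut. Because we descend only as far as $\overline M=L_{t-2}$, whose top quotient has dimension $\dim M-(t-2)\ge 2>0$, that quotient carries no nonzero finite-length submodule and the discrepancy is forced to vanish---which is precisely why the statement halts at $\q_{t-2}$. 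I expect the careful bookkeeping of associated primes across the successive cuts, together with the verification that Cohen--Macaulayness really transfers through the exact sequence, to be where essentially all the work lies.
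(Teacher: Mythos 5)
Your arguments for parts (1) and (2) are correct and complete: prime avoidance from condition (3) of the Goto-sequence definition gives the dimension drop $\dim(M/\q_jM)=s-j$, and the tiling of $(d_i,s]$ by the intervals $(d_{i+m},d_{i+m+1}]$ together with $D_i\subseteq D_{i+m}$ yields distinguishedness when $t=s$. (A minor point: extending a partial system of parameters to a full one does not need the residue field to be infinite.) Note that the paper itself offers no proof of this Fact --- it is imported verbatim from \cite[Lemmas 2.4, 2.5]{Tru19} --- so your attempt must stand as a complete argument on its own, and for part (3) it does not.

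The gap in (3) sits exactly where you write that you ``expect the careful bookkeeping\dots to be where essentially all the work lies'': the identification $U(L/xL)=(U(L)+xL)/xL$ \emph{is} the lemma, and the justification you offer for it is fallacious. Writing $T=L/U(L)$, the finite-length discrepancy is a submodule $U'$ of $T/xT$ that is annihilated by the projection onto $L_j/U(L_j)$; observing that $L_j/U(L_j)$ has positive dimension and hence no nonzero finite-length submodule says nothing about $U'$, because $U'$ is precisely what the projection kills. Moreover, the one-step implication ``$(T/xT)/U'$ Cohen--Macaulay $\Rightarrow T$ Cohen--Macaulay'' is genuinely false without further input: if $T$ is unmixed of dimension $2$ and depth $1$ (say Buchsbaum) and $x$ is a parameter, then $\m\in\Ass(T/xT)$ and $(T/xT)/\H^0_\m(T/xT)$ can perfectly well be Cohen--Macaulay. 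So the Goto hypotheses must enter quantitatively, and your sketch never deploys them beyond prime avoidance: the equality $(0):_{L}x_j=\H^0_\m(L)$ in condition (3) is never used, and condition (1) concerns the strata $C_i/\q_jC_i$, not the modules $L_j=M/\q_jM$; bridging from the strata to $L_j$ itself requires nontrivial intermediate facts (of the type $\q_jM\cap D_i=\q_jD_i$) that you do not establish. It is instructive that the paper, in its own Proposition \ref{P4.60}, performs exactly this climb from $A/((x_{d-1})+N)$ back to $A/N$ not by identifying unmixed components but by a socle-level local cohomology argument: the injectivity of $\alpha\colon (0):_{\H^1_\m(A/(x_{d-1})+N)}\m\to(0):_{\H^2_\m(A/N)}\m$ forces $\H^1_\m(A/N)=x_{d-1}\H^1_\m(A/N)=0$. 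Some argument of that kind (or the full inductive machinery of \cite{Tru19}) is what your plan is missing; as written, parts (1) and (2) are proved and part (3) is an outline whose pivotal step is absent and whose proposed closing argument would not close it.
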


The notion of a sequentially Cohen-Macaulay module was first introduced  by Stanley (\cite{Sta96}) for the graded case, and in \cite{Sch99} for the local case.

\begin{defn}[\cite{Sch99, Sta96}]
	We say that $M$ is a {\it sequentially Cohen-Macaulay $R$-module}, 
	if $D_i/D_{i-1}$ is a Cohen-Macaulay $R$-module for all 
	$1\leq i \leq \ell$. 
\end{defn}

Towards the end of this section, we develop the basic theory of  the index of reducibility. The use of the index of reducibility is an important part of our approach to the Hilbert coefficients; we show that there are very important connections between the index of reducibility and the Hilbert coefficients in the next section. Recall that an $R$-submodule $N$ of $M$ is {\it irreducible}, if $N$ is not written as the intersection of two larger $R$-submodules of $M$. Every $R$-submodule $N$ of $M$ can be expressed as an irredundant intersection of irreducible $R$-submodules of $M$ and the number of irreducible $R$-submodules appearing in such an expression depends only on $N$ and not on the expression (\cite{Noe21}). Let us call, for each $\m$-primary ideal $I$ of $M$, the number $\mathrm{ir}_M(I)$ of irreducible $R$-submodules of $M$ that appear in an irredundant irreducible decomposition of $I M$ is called the index of reducibility of  $I$ on $M$. Notice that, we have 
 $\mathrm{ir}_M(I) =\ell_R([I M :_M \fkm]/I M).$
 For  a parameter ideal $\q$ of $M$, several properties of $\mathrm{ir}_M(\q)$ had been found and played essential roles in the earlier stage of development of the theory of Gorenstein rings and/or Cohen-Macaulay rings. Recently, the index of reducibility of parameter ideals has been used to deduce a lot of information on the structure of some classes of modules, such as regular local rings by W. Gr\"{o}bner (\cite{Gro51}); Gorenstein rings by Northcott, Rees (\cite{Noe21,Nor57,Tru14,Tru19}); Cohen-Macaulay modules by D.G. Northcott, N.T. Cuong, P.H. Quy (\cite{CQT15,Tru14,Tru17,Tru19}); Buchsbaum modules by S. Goto, N. Suzuki and H. Sakurai (\cite{GoS03,GoS84}); generalized Cohen-Macaulay modules by N.T. Cuong, P.H. Quy and the second author (\cite{CuQ11, CuT08}), and see also \cite{Tru13, Tru19,TaT20} for other modules. 
 The following theorem is to give a characterization of a sequentially Cohen-Macaulay module in terms of its the index of reducibility of parameter ideals. 

\begin{thm}[{\cite[Theorem 1.1]{Tru13,Tru19}}]\label{3.800}
	Assume that $R$ is a homomorphic image of a Cohen-Macaulay
	local ring. Then the following statements are equivalent.
\begin{enumerate} 
		\item[$({\rm i})$] $M$ is sequentially Cohen-Macaulay.
		
		\item[$({\rm ii})$]  There exists an integer $n \gg 0$ such that for every distinguished parameter ideals $\q$ of $M$ contained in $\m^{n}$, we have
$\mathrm{ir}_M(\q)=\sum\limits_{j \in \Bbb Z}r_j(M).$
\end{enumerate} 
\end{thm}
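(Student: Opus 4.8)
The plan is to reduce everything to the identity $\mathrm{ir}_M(\q)=\ell_R(\Soc(M/\q M))$, where $\Soc(N)=(0:_N\m)=\Hom_R(k,N)$, and to compare the socle of $M/\q M$ with $\sum_j r_j(M)=\sum_j\ell_R(\Soc(\H^j_\m(M)))$ through the dimension filtration $\calD$. The basic building block is the following fact about a Cohen-Macaulay module $C$ of dimension $d$: for any parameter ideal generated by a system of parameters (automatically a $C$-regular sequence) one has $\ell_R(\Soc(C/(\underline x)C))=\dim_k\Ext^d_R(k,C)=r(C)$, the Cohen-Macaulay type, and by local duality $r(C)=\ell_R(\Soc(\H^d_\m(C)))=r_d(C)$ while $r_j(C)=0$ for $j\ne d$. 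Thus for a Cohen-Macaulay module the index of reducibility of every parameter ideal equals $\sum_j r_j(C)$, and the whole theorem amounts to propagating this equality along $D_0\subsetneq\cdots\subsetneq D_\ell=M$ with pieces $C_i=D_i/D_{i-1}$.

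For $(\mathrm i)\Rightarrow(\mathrm{ii})$ I would first compute the right-hand side. Feeding the short exact sequences $0\to D_{i-1}\to D_i\to C_i\to 0$ into local cohomology and using $\dim D_{i-1}=d_{i-1}<d_i$ together with the fact that each $C_i$ is Cohen-Macaulay of dimension $d_i$ (so $\H^j_\m(C_i)=0$ for $j\ne d_i$), an induction on $i$ gives $\H^{d_k}_\m(M)\cong\H^{d_k}_\m(C_k)$ and $\H^j_\m(M)=0$ for $j\notin\{d_1,\dots,d_\ell\}$; hence $\sum_j r_j(M)=\sum_{i=1}^\ell r_{d_i}(C_i)=\sum_{i=1}^\ell r(C_i)$. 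Next I would compute the left-hand side for a distinguished parameter ideal $\q=(x_1,\dots,x_s)$. Because $\q$ is distinguished the top parameters annihilate $D_{\ell-1}$, so $\q D_{\ell-1}=(x_1,\dots,x_{d_{\ell-1}})D_{\ell-1}$; because $C_\ell=M/D_{\ell-1}$ is Cohen-Macaulay and $x_1,\dots,x_s$ is a $C_\ell$-regular sequence, $\Tor_1^R(R/\q,C_\ell)=0$, so the long exact $\Tor$-sequence of $0\to D_{\ell-1}\to M\to C_\ell\to 0$ collapses to
\[
0\to D_{\ell-1}/\q D_{\ell-1}\to M/\q M\to C_\ell/\q C_\ell\to 0.
\]
Applying $\Hom_R(k,-)$ and showing that the induced map $\Soc(M/\q M)\to\Soc(C_\ell/\q C_\ell)$ is surjective yields socle-additivity, so $\mathrm{ir}_M(\q)=\ell_R(\Soc(D_{\ell-1}/\q D_{\ell-1}))+r(C_\ell)$. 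Since $D_{\ell-1}$ inherits the dimension filtration of $M$ and is again sequentially Cohen-Macaulay, induction on $\ell$ gives $\ell_R(\Soc(D_{\ell-1}/\q D_{\ell-1}))=\sum_{i<\ell}r(C_i)$, and adding $r(C_\ell)$ matches $\sum_j r_j(M)$.

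For the converse $(\mathrm{ii})\Rightarrow(\mathrm i)$ I would argue by induction on $s=\dim M$, using a distinguished system of parameters that is also a Goto sequence (Fact \ref{property}). The first ingredient is a general lower bound: for distinguished $\q\subseteq\m^n$ with $n\gg0$ one has $\mathrm{ir}_M(\q)\ge\sum_j r_j(M)$, obtained by exhibiting $\sum_j r_j(M)$ linearly independent socle elements in $M/\q M$ arising from the various $\Soc(\H^j_\m(M))$ via the filtration. The point is then that equality in $(\mathrm{ii})$ is rigid: running the socle computation backwards forces the relevant connecting maps to vanish and the top piece $C_\ell=M/D_{\ell-1}$ to be Cohen-Macaulay. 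Reducing modulo the parameters of the Goto sequence and invoking Fact \ref{property}(3) to control the unmixed component of $M/\q_{t-2}M$, I would descend to $D_{\ell-1}$ (equivalently, to lower dimension), where the induction hypothesis applies, and conclude that every $C_i$ is Cohen-Macaulay, i.e. that $M$ is sequentially Cohen-Macaulay.

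The main obstacle is the socle-additivity step and its rigidity: proving that $\Hom_R(k,-)$ is right exact on the displayed sequence (equivalently, that the connecting homomorphism into $\Ext^1_R(k,D_{\ell-1}/\q D_{\ell-1})$ vanishes), and, in the converse, that failure of some $C_i$ to be Cohen-Macaulay forces a strict defect $\mathrm{ir}_M(\q)-\sum_j r_j(M)>0$. Both rest on using the distinguished condition together with the depth $\q\subseteq\m^n$ to ensure that the socles of the graded pieces do not interact, and this is precisely where the hypothesis that $R$ is a homomorphic image of a Cohen-Macaulay ring is needed, guaranteeing the existence and good behaviour of Goto sequences.
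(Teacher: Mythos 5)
This theorem is not proved in the paper at all: it is imported verbatim from \cite{Tru13} and \cite{Tru19}, so there is no in-paper proof to compare against, and your sketch must be judged against the strategy of those cited works. Your direction $(\mathrm{i})\Rightarrow(\mathrm{ii})$ does follow that strategy correctly in outline: the computation $\sum_j r_j(M)=\sum_i r_{d_i}(C_i)$ via the filtration is right, the exactness of $0\to D_{\ell-1}/\q D_{\ell-1}\to M/\q M\to C_\ell/\q C_\ell\to 0$ is right (the intersection lemma $\q M\cap D_{\ell-1}=\q D_{\ell-1}$, valid because $\underline x$ is a $C_\ell$-regular sequence, is cleaner than the $\Tor_1$ phrasing), and the induction on $\ell$ works because $x_1,\dots,x_{d_{\ell-1}}$ is again a distinguished system of parameters of $D_{\ell-1}$. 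But the step you yourself flag --- surjectivity of $\Soc(M/\q M)\to\Soc(C_\ell/\q C_\ell)$ --- is the actual content of this direction and you only assert it. It is a real theorem, not a routine check: for arbitrary distinguished $\q$ it is \cite[Proposition 2.8]{CGT13a} (used by this paper inside Lemma \ref{F2.5}), and for $\q\subseteq\m^{\rmg(M)}$ it can be extracted from the diagram chase the paper runs in the proofs of Lemmas \ref{2.10} and \ref{3.444}: the map $\phi^{d}_{\underline x,M}$ is surjective on socles by the definition of $\rmg(M)$, while $C_\ell/\q C_\ell\to\H^{d}_\m(C_\ell)$ is injective because $C_\ell$ is Cohen--Macaulay, and the two together force $\overline\epsilon$ to be surjective on socles. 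Without one of these arguments your forward direction is incomplete at its pivot.

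The converse $(\mathrm{ii})\Rightarrow(\mathrm{i})$ is where the proposal stops being a proof. Two claims carry all the weight and neither is given a mechanism. First, the ``general lower bound'' $\mathrm{ir}_M(\q)\ge\sum_j r_j(M)$ for distinguished $\q\subseteq\m^n$, $n\gg 0$: exhibiting ``linearly independent socle elements arising from the various $\Soc(\H^j_\m(M))$'' is precisely the hard point, since $\mathrm{ir}_M(\q)$ only sees the top Koszul cohomology $M/\q M$, and there is no a priori map from $\Soc(\H^j_\m(M))$, $j<s$, into $\Soc(M/\q M)$; making this work is the content of the splitting/surjectivity machinery of \cite{CuQ11,CQT15,Tru19}, not a formal consequence of the filtration. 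Second, ``equality is rigid: running the socle computation backwards forces the connecting maps to vanish and $C_\ell$ to be Cohen--Macaulay'' names the desired conclusion, not an argument: in \cite{Tru19} this is a delicate induction on dimension along a Goto sequence, which needs (a) comparison inequalities of the type $r_d(M)\le r_{d-j}(M/\q_jM)$ (the paper proves exactly this as Lemma \ref{tcr}, for use in its own main theorem), (b) the transfer of hypothesis $(\mathrm{ii})$ to the quotients --- in particular control of the $g$-invariant of the quotient so that the inherited parameter ideals are still small enough (this is why the paper proves $\rmg(R/\fkb)\le\rmg(R)$, Lemma \ref{3.444}), and (c) only then Fact \ref{property}(3) to conclude $M/D_{\ell-1}$ is Cohen--Macaulay before descending to $D_{\ell-1}$, where one must also check that the equality hypothesis, stated for distinguished parameter ideals of $M$, yields the corresponding hypothesis for $D_{\ell-1}$. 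Items (a)--(c) are absent from your sketch, so the converse direction as proposed has a genuine gap rather than a compressible outline.
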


In the following lemma, we will give some properties of distinguished parameters.
\begin{lem}\label{F2.5}
Let $M$ be a sequentially Cohen-Macaulay $R$-module. Assume that $\q=(\underline x)$ is  a distinguished  parameter ideal of $M$ such that
$\mathrm{ir}_M(\q)=\sum\limits_{j \in \Bbb Z}r_j(M)$. Let $\fkb$ be an ideal generated by a part of a system $\underline x$ of parameters. 
Then  for all $n\ge 0$, we have
$$(\q^n M+D_{\ell-1}+\fkb M):_M \fkm=\q^n M:_M\fkm+D_{\ell-1}+\fkb M
 \text{ and } \q^{n+1}M:\m= \q^n(\q M:\m)+(0):_M\m.$$

\end{lem}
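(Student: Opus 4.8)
The plan is to prove the two stated identities by reducing everything to the sequentially Cohen-Macaulay structure of $M$, using the hypothesis $\mathrm{ir}_M(\q)=\sum_{j}r_j(M)$ as the crucial rigidity input that forces the socle of $\q^n M$ to split along the dimension filtration. The key observation is that for a sequentially Cohen-Macaulay module the top local cohomology modules are ``as small as possible,'' so the total socle dimension $\sum_j r_j(M)$ is the smallest value $\mathrm{ir}_M(\q)$ can take; equality therefore means no socle elements are ``wasted'' across the filtration quotients $C_i=D_i/D_{i-1}$, and this is what will make the intersections split.

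\emph{First} I would establish the first identity. The inclusion $\supseteq$ is formal: each of $\q^n M:_M\fkm$, $D_{\ell-1}$, and $\fkb M$ is visibly contained in $(\q^n M+D_{\ell-1}+\fkb M):_M\fkm$ (for the first because $\q^n M\subseteq \q^n M + D_{\ell-1}+\fkb M$, for the other two because they are annihilated or absorbed appropriately, using that $\underline x$ is distinguished so that $\fkb\, D_{\ell-1}$ and $\q\, D_{\ell-1}$ behave compatibly with the filtration). The hard direction is $\subseteq$. Here I would pass to $\overline M=M/(D_{\ell-1}+\fkb M)$, which is Cohen-Macaulay because $M/D_{\ell-1}$ is (by the sequential Cohen-Macaulay hypothesis together with Fact \ref{property}) and $\fkb$ is generated by part of a distinguished system of parameters, hence a regular sequence on $M/D_{\ell-1}$. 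In a Cohen-Macaulay module the socle behaves rigidly with respect to parameter powers, so $(\q^n\overline M:_{\overline M}\fkm)=\q^n\overline M:\fkm$ lifts back cleanly. The equality of socle dimensions $\mathrm{ir}_M(\q)=\sum_j r_j(M)$ is exactly what guarantees that lifting the socle of $\overline M$ back to $M$ introduces no extra intersection, i.e. that the sum on the right is \emph{direct} in the relevant sense and the colon distributes over it.

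\emph{Second}, for the identity $\q^{n+1}M:\fkm=\q^n(\q M:\fkm)+(0):_M\fkm$, I would again reduce modulo the filtration. Working in the Cohen-Macaulay quotient $M/D_{\ell-1}$, the statement becomes the standard fact that for a parameter ideal $\q$ on a Cohen-Macaulay module one has $\q^{n+1}:\fkm=\q^n(\q:\fkm)$ (this is the ``Watanabe–Yoshida / linearity of socle in the associated graded'' phenomenon for parameter ideals). The term $(0):_M\fkm$ absorbs precisely the socle supported on $D_{\ell-1}$, i.e. the contribution of $\H^0_\fkm(M)$ and the lower-dimensional pieces that are killed in the quotient. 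So the two terms on the right correspond exactly to the two pieces $M/D_{\ell-1}$ and $D_{\ell-1}$ under the filtration, and the equality $\mathrm{ir}_M(\q)=\sum_j r_j(M)$ certifies that these pieces do not interact—no socle element is counted twice or lost.

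\emph{The main obstacle} I anticipate is controlling the interaction between $\q^n M:\fkm$ and the submodule $D_{\ell-1}$ when pulling the Cohen-Macaulay computation on $M/D_{\ell-1}$ back to $M$: a priori a socle element of $M/(D_{\ell-1}+\fkb M)$ could lift to an element of $M$ that only lands in the socle after adding something from $D_{\ell-1}$ \emph{and} from $\q^n M$ simultaneously, which would break the clean direct-sum decomposition. The way I would dispose of this is to use the index-of-reducibility hypothesis together with Theorem \ref{3.800}: since $\mathrm{ir}_M(\q)$ equals the minimal possible value $\sum_j r_j(M)$, every socle element of $\q^n M$ must already be accounted for by a socle element in one of the filtration quotients, so no ``diagonal'' socle elements exist. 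Making this rigidity argument precise—most likely via a length count showing $\ell_R$ of the left side equals $\ell_R$ of the right side, after the $\supseteq$ inclusion is in hand—is where the real work lies, and I would expect to lean on an induction on $\ell$ (the length of the dimension filtration) so that the Cohen-Macaulay base case and the filtration step can be separated.
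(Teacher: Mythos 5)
Your outline (pass to the Cohen--Macaulay quotient $\calM=M/(\fkb M+D_{\ell-1})$, then induct on $\ell$) matches the paper's skeleton, but at both points where you say the hypothesis $\mathrm{ir}_M(\q)=\sum_j r_j(M)$ ``forces'' a splitting, the actual mechanism is absent, and the heuristic you substitute is not the one that works. The paper runs no length count against a minimality of $\sum_j r_j(M)$ (your claim that this is the smallest possible value of $\mathrm{ir}_M(\q)$ is unsubstantiated here --- such lower bounds are known only under extra conditions like $\q\subseteq\m^{\rmg(R)}$, which this lemma does not assume). Instead the first identity rests on two concrete inputs you never identify: (i) since $\calM$ is Cohen--Macaulay, $\gr_\q(\calM)\cong \calM/\q\calM[X_1,\ldots,X_t]$, and applying $\Hom(k,\bullet)$ together with $\q^{n+1}\calM:\m\subseteq\q^{n+1}\calM:\q=\q^n\calM$ yields $\q^{n+1}\calM:_{\calM}\m=\q^n(\q\calM:_{\calM}\m)$, which reduces every power $n$ to the case $n=1$; and (ii) the $n=1$ splitting $(\q M+D_{\ell-1}):_M\m=\q M:_M\m+D_{\ell-1}$, which is exactly \cite[Proposition 2.8]{CGT13a} and is where the sequentially Cohen--Macaulay and distinguished hypotheses actually enter. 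Without (i) you have no route from the socle of $\q\calM$ to that of $\q^{n+1}\calM$, and without (ii) your ``lifting the socle back introduces no extra intersection'' is precisely the unproved assertion; you acknowledge as much when you defer ``making this rigidity argument precise.''

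For the second identity your picture --- that $(0):_M\m$ ``absorbs precisely the socle supported on $D_{\ell-1}$'' --- is wrong as stated. Writing $a=b+c\in\q^{n+1}M:\m$ with $b\in\q^n(\q M:\m)$ and $c\in N=D_{\ell-1}$, one gets $\m c\subseteq \q^{n+1}M\cap N=\q^{n+1}N$ (the intersection equality holds because $\q$ is generated by a regular sequence on the Cohen--Macaulay module $M/N$), hence $c\in\q^{n+1}N:\m$, \emph{not} $c\in(0):_N\m$. The term $(0):_M\m$ emerges only at the bottom of the induction on $\ell$, where $\dim N=0$ and $\q N=(0)$ by distinguishedness. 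Moreover, to run the inductive step one must verify that the hypotheses transfer to $N$: that $N$ is sequentially Cohen--Macaulay, that $\q$ is distinguished on $N$, and crucially that $\mathrm{ir}_N(\q)=\sum_j r_j(N)$ --- the paper invokes the proof of \cite[Theorem 1.1]{Tru13} for this last transfer, which is not automatic. You do anticipate an induction on $\ell$, but the intersection equality $\q^{n+1}M\cap N=\q^{n+1}N$ and the transfer of the index-of-reducibility hypothesis to $N$ constitute the substance of the proof, and your proposal supplies neither.
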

\begin{proof}
  Let $N=D_{\ell-1}$, $\calM= M/\fkb M+N$, $t=\dim \calM$ and  $\operatorname{gr}_{\q}(\calM)=\bigoplus\limits_{n\ge 0}\frak q^n\calM/\frak q^{n+1}\calM$. 
 Since $\calM$ is Cohen-Macaulay, we have a natural isomorphism of graded modules
$${\rm gr}_\q(\calM):=\bigoplus\limits_{n\ge 0}\frak q^n\calM/\frak q^{n+1}\calM\to \calM/\frak q \calM[X_{1},\ldots,X_t],$$ where $\{X_{i}\}_{i=1}^t$ are indeterminates.
After applying the functor $\Hom(k,\bullet)$, we get
$  {\mathfrak{q}^{n+1}\mathcal{M}:\mathfrak{m} } =  {\mathfrak{q}^{n}\mathcal{M}(\mathfrak{q}\mathcal{M}:\mathfrak{m}) }.$
 Since $\frak q$ is the parameter ideal of Cohen-Macaulay module $\calM$, we get
  $\frak q^{n+1}\calM:\frak m\subseteq \frak q^{n+1}\calM:\frak q=\frak q^{n}\calM.$
  It follows that 
$\frak
q^{n+1} \calM:\frak m=\frak q^{n}\calM(\frak q \calM:_\calM\frak m)$ and so we have
$$(\frak q^{n+1}M+N+\fkb M):\frak m = \frak q^n((\frak qM+N):\frak m)+N+\fkb M.$$
 By \cite[Proposition 2.8]{CGT13a}, we have $(\q M+N):_M\m=\q M:_M\m+N,$ and so
 we obtain
$$(\frak q^{n+1}M+N+\fkb M):\m \subseteq \q^n(\frak q M:\frak
m)+N+\fkb M\subseteq \frak q^{n+1}M:\frak m+N+\fkb M.$$ Hence we have
$(\frak q^{n+1}M+N+\fkb M):\m = \q^n(\frak q M:\frak
m)+N+\fkb M= \frak q^{n+1}M:\frak m+N+\fkb M$
for all $n\ge 0$.

Now, 
let $a\in\q^{n+1}M:\m$. Then $a\in \q^n (\q M:\m)+N$
 and we write $a=b+c$ for $b\in\q^n (\q M:\m)$ and $c\in N$. Then $\m c=\m (a-b)\in \q^{n+1}M\cap N$.   Since $\q$ is a parameter ideal of Cohen-Macaulay module $M/N$, we have $\q^{n+1}M\cap N=\q^{n+1}N$ and so we get $c\in \q^{n+1}N:\m$. 
Hence $\q^{n+1}M:\m\subseteq \q^n(\q M:\m)+\q^{n+1}N:\m$. Moreover, we obtain 
$\q^{n+1}M:\m
=\q^n(\q M:\m)+\q^{n+1} N:\m.
$Now if $\dim N=0$, we have $\q N=0$, because of the definition of distinguished parameter ideals. Then  $\q^{n+1}M:\m
=\q^n(\q M:\m)+(0):_M\m$.\par 

If $\dim N> 0$, since $M$ is sequentially Cohen-Macaulay and $\q$ is the distinguished parameter ideal of $M$,  $N$ is sequentially Cohen-Macaulay and $\q$ is a distinguished parameter ideal of $N$ such that
$\mathrm{ir}_N(\q)=\sum\limits_{j \in \Bbb Z}r_j(N)$ (\cite[the proof of Theorem 1.1]{Tru13}).
By the induction on $\ell$, we have
$\q^{n+1} N:\m=\q^n[\q N:\m]+(0):_N\m$. Therefore, we have $\q^{n+1}M:\m=\q^n(\q M:\m)+(0):_M\m,$
as required.
\end{proof}

\begin{prop}\label{P2.7}
Let $M$ be a sequentially Cohen-Macaulay module of dimension $s$. Assume that $\q$ is  a distinguished  parameter ideal of $M$ such that
$\mathrm{ir}_M(\q)=\sum\limits_{j \in \Bbb Z}r_j(M).$ Then for all $n\ge 1$, we have
$$\mathrm{ir}_M(\q^{n+1}) = \sum\limits_{i=1}^sr_i(M)\binom{n+i-1}{i-1}+r_0(M).$$

\end{prop}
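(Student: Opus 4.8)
The plan is to argue by induction on the length $\ell$ of the dimension filtration $\calD:D_0=(0)\subsetneq D_1\subsetneq\cdots\subsetneq D_\ell=M$, so that $\dim N<\dim M$ for $N:=D_{\ell-1}$ at each step. Write $r_i=r_i(M)$ and $\Soc(-)=(0:_{-}\m)$, so that $\mathrm{ir}_M(\q^{n+1})=\ell(\Soc(M/\q^{n+1}M))$. For the base case $\ell=1$, where $M$ is Cohen-Macaulay of dimension $s$, all $r_i$ vanish except $r_s=\mathrm{ir}_M(\q)$; for $s\ge1$ the claim reads $\mathrm{ir}_M(\q^{n+1})=r_s\binom{n+s-1}{s-1}$ (the case $s=0$, where $\q M=0$, being immediate). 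I would deduce this from Lemma \ref{F2.5}: since $\depth M=s\ge1$ forces $(0):_M\m=0$, the lemma gives $\q^{n+1}M:\m=\q^n(\q M:\m)$, so $\mathrm{ir}_M(\q^{n+1})=\ell(\q^n(\q M:\m)/\q^{n+1}M)$. Using the isomorphism $\gr_\q(M)\cong(M/\q M)[X_1,\ldots,X_s]$ recalled inside the proof of Lemma \ref{F2.5}, the image of $\q^n(\q M:\m)$ in the degree-$n$ component $\q^nM/\q^{n+1}M$ is $\Soc(M/\q M)\otimes_k(k[X_1,\ldots,X_s])_n$, of length $r_s\binom{n+s-1}{s-1}$.

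For the inductive step I set $N=D_{\ell-1}$, so $M/N=C_\ell$ is Cohen-Macaulay of dimension $s\ge1$ and $\dim N=d_{\ell-1}<s$. The inductive hypothesis applies to $N$: by the facts invoked inside Lemma \ref{F2.5} (from \cite[the proof of Theorem 1.1]{Tru13}), $N$ is sequentially Cohen-Macaulay, $\q$ is distinguished on $N$, and $\mathrm{ir}_N(\q)=\sum_j r_j(N)$. From the sequence $0\to N\to M\to M/N\to 0$ and the concentration of the local cohomology of $M/N$ in top degree, the long exact sequence of local cohomology yields $r_i(M)=r_i(N)$ for $0\le i\le s-1$ (with $r_i(N)=0$ for $i>\dim N$) and $r_s(M)=r_s(M/N)$. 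Granting for the moment the additivity
$$\mathrm{ir}_M(\q^{n+1})=\mathrm{ir}_N(\q^{n+1})+\mathrm{ir}_{M/N}(\q^{n+1}),$$
substituting the inductive formula for $\mathrm{ir}_N(\q^{n+1})$, the base-case formula $\mathrm{ir}_{M/N}(\q^{n+1})=r_s(M/N)\binom{n+s-1}{s-1}$, and the above identities among the $r_i$ reassembles the sum into $\sum_{i=1}^s r_i\binom{n+i-1}{i-1}+r_0$, which is exactly the assertion.

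Thus everything comes down to the additivity displayed above, which I expect to be the main obstacle. Here is how I would attack it. Because $M/N$ is Cohen-Macaulay with $\q$ a parameter ideal, $\q^{n+1}M\cap N=\q^{n+1}N$ (as already used in Lemma \ref{F2.5}), giving a short exact sequence $0\to N/\q^{n+1}N\to M/\q^{n+1}M\to (M/N)/\q^{n+1}(M/N)\to 0$. Applying $\Hom_R(k,-)$ produces the left-exact socle sequence, and the additivity of lengths is equivalent to surjectivity of the induced map $\Soc(M/\q^{n+1}M)\to\Soc((M/N)/\q^{n+1}(M/N))$, equivalently the vanishing of the connecting map into $\Ext^1_R(k,N/\q^{n+1}N)$. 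To prove this surjectivity I would use two consequences of our hypotheses via Lemma \ref{F2.5}: first, $\q^{n+1}(M/N):\m=\q^n(\q(M/N):\m)$ and $\q^n(\q M:\m)\subseteq \q^{n+1}M:\m$; second, by \cite[Proposition 2.8]{CGT13a}, which gives $(\q M+N):\m=\q M:\m+N$, the projection $M\to M/N$ carries $\q M:\m$ onto $\q(M/N):\m$.

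With these in hand the surjectivity is immediate: any socle class of $(M/N)/\q^{n+1}(M/N)$ is represented by some $\sum_\alpha a_\alpha\bar w_\alpha$ with $a_\alpha\in\q^n$ and $\bar w_\alpha\in\q(M/N):\m$; choosing lifts $w_\alpha\in\q M:\m$ and forming $w=\sum_\alpha a_\alpha w_\alpha\in\q^n(\q M:\m)\subseteq \q^{n+1}M:\m$ yields a socle class of $M/\q^{n+1}M$ mapping onto the given one. The crux is exactly this lifting of socle elements through the filtration, and the hypotheses that $\q$ be distinguished with $\mathrm{ir}_M(\q)=\sum_j r_j(M)$ enter precisely by making Lemma \ref{F2.5}, and hence the two identities just used, available.
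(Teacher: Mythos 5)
Your proposal is correct and takes essentially the same route as the paper: induction on the length $\ell$ of the dimension filtration, the exact sequence $0\to N/\q^{n+1}N\to M/\q^{n+1}M\to L/\q^{n+1}L\to 0$ with $N=D_{\ell-1}$ and $L=M/N$ Cohen-Macaulay, additivity of socle lengths via Lemma \ref{F2.5}, and the identities $r_s(L)=r_s(M)$, $r_i(N)=r_i(M)$ for $i\le s-1$. The only difference is presentational: where the paper quotes the Cohen-Macaulay formula $\mathrm{ir}_L(\q^{n+1})=r_s(L)\binom{n+s-1}{s-1}$ from \cite{Tru14,CQT15} and gets socle surjectivity directly from the identity $[\q^{n+1}M+N]:\m=\q^{n+1}M:\m+N$ of Lemma \ref{F2.5}, you re-derive both by hand from the $\gr_{\q}$-isomorphism and \cite[Proposition 2.8]{CGT13a}, which are exactly the ingredients of the paper's proof of that lemma.
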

\begin{proof}
Let $N=D_{\ell-1}$ and $L=M/N$.  Since $\q$ is a parameter ideal of Cohen-Macaulay module $L$,  the  sequence 
$$0\to N/\frak q^{n+1} N\to M/\frak q^{n+1}M\to L/\frak q^{n+1} L \to0$$
are exact.
It follows from  $[\frak
q^{n+1}M+N]:\frak m= \frak q^{n+1}M:\frak m+N$ by the Lemma \ref{F2.5}  and
applying $\Hom_R(k,*)$ to the above sequence that we obtain the  exact sequence
$$0\to \Hom_R(k,N/\frak q^{n+1} N)\to \Hom_R(k,M/\frak q^{n+1}M)\to \Hom_R(k,L/\frak q^{n+1} L)\to0.$$
Therefore, we have
$\ell_R([\frak q^{n+1}M:\frak m]/\frak q^{n+1}M)=\ell_R([\frak q^{n+1} N:\frak m]/\frak q^{n+1} N)+\ell_R([\frak q^{n+1} L:\frak m]/\frak q^{n+1} L).$
Since $L$ is Cohen-Macaulay  by \cite[Theorem 1.1]{Tru14} or \cite[Theorem 5.2]{CQT15}, we get 
$\ell_R([\frak q^{n+1} L:\frak m]/\frak q^{n+1} L)  =
r_s(L)\binom{n+s-1}{s-1}.$
Since $M$ is sequentially Cohen-Macaulay, so is $N$ and we have $r_s(L)= r_{s}(M)$ and $r_{i}(N) = r_{i}(M)$ for all $i \le s-1.$  Moreover, $\q$ is a distinguished parameter ideal of $N$ such that
$\mathrm{ir}_N(\q)=\sum\limits_{j \in \Bbb Z}r_j(N)$ (\cite[the proof of Theorem 1.1]{Tru13}).
By the induction on $\ell$, we have
$$\begin{aligned}
\mathrm{ir}_M(\q^{n+1})
= \mathrm{ir}_L(\q^{n+1})+\mathrm{ir}_N(\q^{n+1})
 &=r_s(L)\binom{n+s-1}{s-1}+\sum\limits_{i=1}^{d_{\ell-1}}r_i(M)\binom{n+i-1}{i-1}+r_0(N)\\
&= \sum\limits_{i=1}^sr_i(M)\binom{n+i-1}{i-1}+r_0(M),
\end{aligned}
$$ for all $n\ge 1$. Thus, the proof is complete.

\end{proof}

\section{The Hilbert coefficients of socle ideals}

In this section, we explore the relation between the index of reducibility and Hilbert coefficients. 
We maintain the following settings in Section 3 and Section 4.

\begin{setting}\label{2.6}{\rm
		Assume that $R$ is a homomorphic image of a Cohen-Macaulay local ring. Let $\calD = \{\fka_i\}_{0 \le i \le \ell}$ be the dimension filtration of $R$ with $\dim \fka_i=d_i$. We put $C_i = \fka_{i}/\fka_{i-1}$, $S = R/\fka_{\ell -1}$.		 Let 
		$\q$ be a distinguished parameter ideal of $R$, which is  generated by 
		 a distinguished system $\underline x=x_1,x_2,\ldots,x_d$  of parameters of $R$. Then we set $I=\q:\m$, $\fkb=(x_{d_{\ell-1}},\ldots,x_d)$,
		 and $\q_j=(x_1,\ldots,x_j)$ for all $j=1,\ldots,d$.   
}
\end{setting}

 For each integer $n\geq 1$, we denote by ${\underline x}^n$ the sequence $x^n_1, x^n_2,\ldots,x^n_d$. Let $K^{\bullet}(x^n)$ be the
Koszul complex of $R$ generated by the sequence ${\underline x}^n$ and let
$H^{\bullet}({\underline x}^n;R) = H^{\bullet}(\Hom_R(K^{\bullet}({\underline x}^n),R))$
be the Koszul cohomology module of $R$. Then for every $p\in\Bbb Z$, the family $\{H^p({\underline x}^n;R)\}_{n\ge 1}$
naturally forms an inductive system of $R$, whose limit
$H^p_\fkq(R)=\lim\limits_{n\to\infty} H^p({\underline x}^n;R)$
is isomorphic to the local cohomology module
$H^p_\fkm(R)=\lim\limits_{n\to\infty} \Ext_R^p(R/\fkm^n,R).$
For each $n\geq 1$ and $p \in\Bbb Z$, let $\phi^{p,n}_{{\underline x},R}:H^p({\underline x}^n;R)\to H^p_\fkm(R)$ denote the canonical
homomorphism into the limit.


\begin{defn}[{\cite[Lemma 3.12]{GoS03}}]\label{sur}
	{\rm There exists an integer $n_0$ 
		such that for all systems of parameters ${\underline x}=x_1,\ldots,x_d$  for $R$ contained in $\fkm^{n_0}$ and for all $p\in \Bbb Z$, the canonical homomorphisms
		$$\phi^{p,1}_{{\underline x},R}:H^p({\underline x},R)\to H^p_\fkm(R)$$
		into the inductive limit are surjective on the socles.
		The least integer $n_0$ with this property is called a \textit{g-invariant} of $R$ and denote by $\rmg(R)$.}
\end{defn}


The next lemmas establish  certain properties of  \textit{g-invariant} and distinguished parameter ideals $\q \subseteq \m^{\rmg(R)}$.

\begin{lem}[{\cite[Lemma 2.12]{Tru19}}]\label{2.10}
	Assume that $S$ is Cohen-Macaulay. Then  for all parameter ideals $\q\subseteq\m^{\rmg(R)}$ of $R$, we have 
	$$\mathrm{ir}_R(\q)=\mathrm{ir}_{\fka_{\ell-1}}(\q)+\mathrm{ir}_{S}(\q).$$
	
\end{lem}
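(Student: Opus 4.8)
The plan is to base everything on the defining short exact sequence of the unmixed component. Write $N=\fka_{\ell-1}$, so $\dim N = d_{\ell-1}<d$ and, by hypothesis, $S=R/N$ is Cohen-Macaulay of dimension $d$. Let $\underline x=x_1,\ldots,x_d$ be a system of parameters generating $\q$; its image in $S$ is a system of parameters of the Cohen-Macaulay module $S$, hence an $S$-regular sequence. First I would produce from $0\to N\to R\to S\to 0$ the short exact sequence
\[
0\to N/\q N\to R/\q\to S/\q S\to 0, \qquad (\ast)
\]
by tensoring with the Koszul complex $K_\bullet(\underline x)$ (a bounded complex of free modules) and reading off the long exact sequence of Koszul homology: since $\underline x$ is $S$-regular one has $H_1(\underline x;S)=0$, which forces $N\cap \q R=\q N$ and gives $(\ast)$. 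Recalling $\mathrm{ir}_M(\q)=\ell_R(\Soc(M/\q M))=\dim_k\Hom_R(k,M/\q M)$, applying the left-exact functor $\Hom_R(k,-)$ to $(\ast)$ reduces the whole lemma to the single claim that the induced map $\pi\colon \Soc(R/\q)\to \Soc(S/\q S)$ is surjective, because additivity of lengths in $(\ast)$ then becomes additivity of the socle lengths.

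To control this surjectivity I would bring in the hypothesis $\q\subseteq\m^{\rmg(R)}$ through the comparison maps into local cohomology. Write $\phi_R$ and $\phi_S$ for $\phi^{d,1}_{\underline x,R}$ and $\phi^{d,1}_{\underline x,S}$, and identify $H^d(\underline x;-)$ with $(-)/\q(-)$. Naturality of the $\phi^{d,1}$ then places the top-degree part of $(\ast)$ in a commutative square with a piece of the local cohomology sequence of $0\to N\to R\to S\to0$:
\[
\begin{CD}
R/\q @>\pi>> S/\q S\\
@V{\phi_R}VV @VV{\phi_S}V\\
H^d_\m(R) @>\psi>> H^d_\m(S).
\end{CD}
\]
Because $\dim N<d$ we have $H^{d}_\m(N)=H^{d+1}_\m(N)=0$, and because $S$ is Cohen-Macaulay $H^{d-1}_\m(S)=0$; substituting these vanishings into the local cohomology long exact sequence shows that $\psi$ is an isomorphism.

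Finally I would run a diagram chase on socles. The defining property of $\rmg(R)$ in Definition \ref{sur} (case $p=d$) says that $\phi_R$ is surjective on socles, and since $\psi$ is an isomorphism so is $\psi|_{\Soc}$. On the other hand, for the Cohen-Macaulay module $S$ the transition maps $S/\q^{[n]}S\to S/\q^{[n+1]}S$ (multiplication by $x_1\cdots x_d$) are injective, by the regular-sequence colon identity $(x_1^{n+1},\ldots,x_d^{n+1})S:_S(x_1\cdots x_d)=(x_1^n,\ldots,x_d^n)S$, so $\phi_S\colon S/\q S\to H^d_\m(S)$ is injective and hence $\phi_S|_{\Soc}$ is injective. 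Now for $\sigma\in\Soc(S/\q S)$ I lift $\psi^{-1}(\phi_S(\sigma))\in\Soc(H^d_\m(R))$ along the surjection $\phi_R|_{\Soc}$ to some $\rho\in\Soc(R/\q)$; commutativity gives $\phi_S(\pi(\rho))=\phi_S(\sigma)$, and injectivity of $\phi_S|_{\Soc}$ forces $\pi(\rho)=\sigma$. Thus $\pi$ is surjective, $(\ast)$ yields a short exact sequence of socles, and comparing lengths gives $\mathrm{ir}_R(\q)=\mathrm{ir}_{\fka_{\ell-1}}(\q)+\mathrm{ir}_S(\q)$. The hard part is this last transfer: the $\rmg(R)$-surjectivity is a statement about $R$, and moving it to $S$ relies on the isomorphism $\psi$ (from $\dim N<d$ and the Cohen-Macaulayness of $S$) together with the injectivity of $\phi_S$; making the comparison maps $\phi^{d,1}$ genuinely natural with respect to $R\to S$ so that the square above commutes is the step deserving the most care.
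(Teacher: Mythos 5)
Your proof is correct and takes essentially the same route the paper relies on: the lemma is imported from \cite{Tru19}, and the identical socle-lifting argument --- the exact sequence $0\to\fka_{\ell-1}/\q\fka_{\ell-1}\to R/\q\to S/\q S\to 0$, surjectivity of $\phi_R$ on socles from $\q\subseteq\m^{\rmg(R)}$, injectivity of $\phi_S$ from the Cohen--Macaulayness of $S$, and the identification $\H^d_\m(R)\cong \H^d_\m(S)$ coming from $\dim\fka_{\ell-1}<d$ --- is reproduced inside the paper in the proof of Lemma \ref{3.444}. Your only deviations are harmless bookkeeping: the Koszul-homology verification that $\fka_{\ell-1}\cap\q=\q\fka_{\ell-1}$ (left implicit in the paper) and the superfluous vanishing $\H^{d-1}_\m(S)=0$, which is not needed for $\psi$ to be an isomorphism.
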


\begin{lem}\label{3.444}
		Assume that $S$ is Cohen-Macaulay and $\q=(x_1,\ldots,x_d)$ is a distinguished parameter ideal of $R$ such that $\q \subseteq \m^{\rmg(R)}$. Let $\fkb=(x_{d_{\ell-1}},\ldots,x_d)$.  
	Then  we have $\rmg(R/\fkb)\le \rmg(R).$
\end{lem}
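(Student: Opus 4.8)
The plan is to argue directly from Definition \ref{sur}. Since $\underline x=x_1,\ldots,x_d$ is a system of parameters of $R$ and $\fkb=(x_{d_{\ell-1}},\ldots,x_d)$ is generated by $d-d_{\ell-1}+1$ of them, Krull's principal ideal theorem together with the fact that adjoining $x_1,\ldots,x_{d_{\ell-1}-1}$ makes $R/\fkb$ Artinian shows $\dim R/\fkb=d_{\ell-1}-1=:e$. By Definition \ref{sur} it therefore suffices to prove that for every system of parameters $\bar{\underline y}=\bar y_1,\ldots,\bar y_e$ of $R/\fkb$ with $\bar y_i\in(\m/\fkb)^{\rmg(R)}$ and every $p\in\Bbb Z$, the canonical map $\phi^{p,1}_{\bar{\underline y},R/\fkb}:H^p(\bar{\underline y};R/\fkb)\to H^p_\fkm(R/\fkb)$ is surjective on socles; once this is shown, $\rmg(R)$ satisfies the defining property of the $g$-invariant for $R/\fkb$, and hence $\rmg(R/\fkb)\le\rmg(R)$.

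First I would lift and extend the parameters. As $(\m/\fkb)^{\rmg(R)}=(\m^{\rmg(R)}+\fkb)/\fkb$, each $\bar y_i$ has a representative $y_i\in\m^{\rmg(R)}$. Then $\underline z:=y_1,\ldots,y_e,x_{d_{\ell-1}},\ldots,x_d$ is a system of parameters of $R$: modding out $\fkb$ and then $\bar{\underline y}$ leaves an Artinian ring, so $(\underline z)$ is $\m$-primary, and $\sharp\,\underline z=e+(d-d_{\ell-1}+1)=d$. Moreover $\underline z\subseteq\m^{\rmg(R)}$, because $y_i\in\m^{\rmg(R)}$ and $\q=(\underline x)\subseteq\m^{\rmg(R)}$. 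By the definition of $\rmg(R)$, the maps $\phi^{p,1}_{\underline z,R}:H^p(\underline z;R)\to H^p_\fkm(R)$ are therefore surjective on socles for all $p$.

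The crux is to transfer this surjectivity from $(R,\underline z)$ to $(R/\fkb,\bar{\underline y})$. Here I would use the distinguished structure of Setting \ref{2.6}: the distinguished condition at $i=\ell-1$ gives $(x_{d_{\ell-1}+1},\ldots,x_d)\,\fka_{\ell-1}=(0)$, while the Cohen-Macaulayness of $S=R/\fka_{\ell-1}$ makes $\underline w:=x_{d_{\ell-1}},\ldots,x_d$ an $S$-regular sequence, so that $S/\fkb S$ is Cohen-Macaulay of dimension $e$. Writing $R/\fkb=H^{d-d_{\ell-1}+1}(\underline w;R)$, I would compare $H^p(\bar{\underline y};R/\fkb)$ with $H^{p+(d-d_{\ell-1}+1)}(\underline z;R)$, and $H^p_\fkm(R/\fkb)$ with the $H^{\bullet}_\fkm(R)$, through the iterated long exact sequences (equivalently, the spectral sequences) obtained by adjoining the generators of $\underline w$ one by one, noting that the correction terms $H^{j}(\underline w;R)$ for $j<d-d_{\ell-1}+1$ are supported on the low-dimensional module $\fka_{\ell-1}$, since $\underline w$ acts on $\fka_{\ell-1}$ through $x_{d_{\ell-1}}$ alone. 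Running this comparison compatibly with the natural transformation given by the maps $\phi^{p,1}$, that is, as a commutative ladder of Koszul and local cohomology, lets me deduce the socle-surjectivity of $\phi^{p,1}_{\bar{\underline y},R/\fkb}$ from that of $\phi^{p,1}_{\underline z,R}$.

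The main obstacle is precisely this comparison. Because $R$ is not Cohen-Macaulay and $\fkb$ is not an $R$-regular sequence --- the elements $x_{d_{\ell-1}+1},\ldots,x_d$ are zerodivisors, annihilating $\fka_{\ell-1}$ --- the Koszul and local cohomologies of $R/\fkb$ are not simple degree shifts of those of $R$, and one must verify that the comparison maps send socles to socles and preserve surjectivity onto them. What makes the diagram chase go through is that the error terms are governed by the Cohen-Macaulay modules $S$ and $S/\fkb S$ and by the vanishing $(x_{d_{\ell-1}+1},\ldots,x_d)\fka_{\ell-1}=(0)$, which confines the nontrivial cohomology to the expected degrees so that surjectivity on socles propagates. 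An alternative route is to kill the generators of $\fkb$ one at a time and induct, controlling $(0:_{(-)}x)$ via the dimension filtration at each step; I expect the bookkeeping there to be heavier but the structural input to be identical.
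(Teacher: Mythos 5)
Your opening moves match the paper's exactly: the paper also fixes an arbitrary parameter ideal $\overline Q\subseteq\overline\m^{\,\rmg(R)}$ of $\overline R=R/\fkb$, lifts it to an ideal $Q$ with $\fkb\subseteq Q\subseteq\m^{\rmg(R)}$ generated by a full system of parameters of $R$ (your $\underline z$), and invokes the definition of $\rmg(R)$ to get socle-surjectivity of $\phi^{p}_{Q,R}$. But everything after that — what you yourself call ``the crux'' — is left as a plan, and the plan as stated would not go through. You propose to compare $H^p(\bar{\underline y};R/\fkb)$ with the shifted Koszul cohomology $H^{p+c}(\underline z;R)$ and, implicitly, $\H^p_\m(R/\fkb)$ with $\H^{p+c}_\m(R)$. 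Since $\fkb$ is not an $R$-regular sequence there is no natural map between $\H^{p+c}_\m(R)$ and $\H^p_\m(R/\fkb)$ to run your ladder through, and in fact the correct comparison carries \emph{no shift at all}: from $0\to\fka_{\ell-1}\to R\to S\to 0$ and $0\to\fka_{\ell-1}\to R/\fkb\to S/\fkb S\to 0$, the Cohen--Macaulayness of $S$ and of $S/\fkb S$ gives $\H^p_\m(R/\fkb)\cong\H^p_\m(\fka_{\ell-1})\cong\H^p_\m(R)$ for all $p<d_{\ell-1}$, so the low-degree socle-surjectivity for $R/\fkb$ is transferred from that of $\phi^p_{Q,R}$ \emph{at the same degree $p$}, through the common submodule $\fka_{\ell-1}$. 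Your acknowledgment that ``one must verify that the comparison maps send socles to socles and preserve surjectivity onto them'' concedes precisely the content of the lemma; asserting that the CM structure ``confines the cohomology so that surjectivity propagates'' is not a proof, and in the one degree where the real work happens it is not even the right mechanism.

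That remaining degree is $p=d_{\ell-1}$ (the top one for $R/\fkb$), and here the paper's argument has a genuinely different engine that your sketch lacks. First, using that $\phi^d_{Q,S}$ is \emph{injective} on socles ($S$ being Cohen--Macaulay) together with surjectivity of $\phi^d_{Q,R}$, one proves the socle-splitting $[\fka_{\ell-1}+Q]:_R\m=\fka_{\ell-1}+(Q:_R\m)$, whence $\ell((0):_{R/Q}\m)=\ell((0):_{\fka_{\ell-1}/Q\fka_{\ell-1}}\m)+\ell((0):_{S/QS}\m)$. This length count is what makes the row of socles in the ladder built on $0\to\fka_{\ell-1}/Q\fka_{\ell-1}\to R/Q\to S/QS\to 0$ exact; combining exactness with surjectivity of the socle map for $\fka_{\ell-1}$ (from degrees $<d$) and for the Cohen--Macaulay module $S/\fkb S$ then forces surjectivity of $\overline{\phi^{d_{\ell-1}}_{Q,R/\fkb}}$. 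Nothing in your spectral-sequence outline produces this splitting, and without it the chase in the ladder does not close. A final caveat: the proof requires $\fkb\,\fka_{\ell-1}=(0)$ and $\fka_{\ell-1}\cap\fkb=(0)$, which hold for $\fkb=(x_{d_{\ell-1}+1},\ldots,x_d)$ by distinguishedness (the statement's $\fkb=(x_{d_{\ell-1}},\ldots,x_d)$ is an off-by-one slip, as the paper's own use of $\H^{d_{\ell-1}}_\m(R/\fkb)$ as top cohomology and of $\fka_{\ell-1}\hookrightarrow R/\fkb$ shows); your literal reading, in which $x_{d_{\ell-1}}$ acts nontrivially on $\fka_{\ell-1}$ and $\dim R/\fkb=d_{\ell-1}-1$, would further break the claim that the correction terms are controlled by $\fka_{\ell-1}$ alone.
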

\begin{proof}
Let $\overline R=R/\fkb$.  For an ideal $J$ of $R$, we denote $\overline J=(J+\fkb)/\fkb$. 
Let $\overline Q$ be a parameter ideal of $\overline R$ such that $\overline Q\subseteq \overline \m^{\rmg(R)}$. Then we have
 $\fkb\subseteq Q\subseteq \fkm^{\rmg(R)}$, where $Q$ is a preimage of the ideal $\overline Q$  in $R$.  By the definition of  g-invariant,  the canonical maps
$\phi^{p}_{Q,R}:H^p(Q,R)\to H^p_\fkm(R)$
	are surjective on the socles.  Then we look at the exact sequence
	\begin{equation}\label{eqn1}
	\xymatrix{0\ar[r]& \fka_{\ell-1}\ar[r]^\iota& R \ar[r]^\epsilon & S \ar[r]&0}
	\end{equation}
	of $R$-modules, where $\iota$ (resp. $\epsilon$) denotes the canonical embedding (resp. the canonical epimorphism).
	Since $\dim \fka_{\ell-1} < d$ and  $S$ is Cohen-Macaulay, we get the following commutative diagram 
	$$\xymatrix{
		0\ar[r]& \fka_{\ell-1}/Q \fka_{\ell-1}\ar[r]^{\overline \iota}& R/Q  \ar[d]^{\phi_R}\ar[r]^{\overline \epsilon} & S/Q S \ar[r] \ar@{^{(}->}[d]^{\phi_{S}}&0\\
		& & \H^d_\m(R) \ar[r]^{=} & \H^d_\m(S) &}$$
with  exact first row. 
	Let $x\in (0):_{S/QS}\m$. Then, since $\phi_R^d$ is surjective on the socles, we get  an element $y\in (0):_{R/Q R}\m$ such that $\phi_{S}^d(x)=\phi_R^d(y)$. Thus $\overline{\epsilon}(y)=x$, because the canonical map $\phi_{S}^d$ is injective, whence
	$[\fka_{\ell-1}+Q]:_R \fkm=\fka_{\ell-1} + [Q :_M\fkm].$ Therefore,
we
have 	$$\ell((0):_{R/Q}\fkm)=\ell((0):_{\fka_{\ell-1}/Q \fka_{\ell-1}}\fkm)+\ell((0):_{S/Q S}\fkm).$$
The exact sequence \ref{eqn1} induces, for all $p< d$, the commutative diagram
$$\xymatrix{
		H^p(Q,\fka_{\ell-1})\ar@{^{}->}[d]^{\phi_{Q,\fka_{\ell-1}}^p}\ar[r]&H^p(Q,R)\ar[d]^{\phi^{p}_{Q,R}}\\
		\H^p_\m(\fka_{\ell-1}) \ar[r]^{=} & \H^p_\m(R).}$$
Since $\phi^{p}_{Q,R}$ are surjective on the socles for all $p$,  $\phi^{p}_{Q,\fka_{\ell-1}}$ are surjective on the socles for all $p<d$. Since $x_{d_{\ell-1}},\ldots,x_d$ is a regular sequence of $S$, it follows from the exact sequence \ref{eqn1} and $\fka_{\ell-1}\cap \fkb=0$ that the sequence
	\begin{equation}\label{eqn3}
\xymatrix{0\ar[r]& \fka_{\ell-1}\ar[r]&R/\fka\ar[r]&S/\fkb S\ar[r]&0}
\end{equation}
is exact.
Therefore, we have the commutative diagram
$$\xymatrix{
		0\ar[r]& \fka_{\ell-1}/Q \fka_{\ell-1}\ar[r]\ar[d]^{\phi_{Q,\fka_{\ell-1}}^{d_{\ell-1}}}& R/Q  \ar[d]^{\phi_{Q,R/\fkb}^{d_{\ell-1}}}\ar[r] & S/Q S \ar[r] \ar@{^{(}->}[d]^{\phi_{Q,S/\fkb S}^{d_{\ell-1}}}&0\\
		0\ar[r]&\H^{d_{\ell-1}}_\m(\fka_{\ell-1})\ar[r] & \H^{d_{\ell-1}}_\m(R/\fkb ) \ar[r] & \H^{d_{\ell-1}}_\m(S/\fkb S) \ar[r]&0 }$$
with  exact rows.  By applying the functor $\Hom(k;\bullet)$ to this diagram, we obtain a commutative diagram
	\begin{equation}\label{eqn2}
\xymatrix{
		0\ar[r]& (0):_{\fka_{\ell-1}/Q \fka_{\ell-1}}\fkm\ar[r]^{\overline \iota}\ar[d]^{\overline{\phi_{Q,\fka_{\ell-1}}^{d_{\ell-1}}}}& (0):_{R/Q }\fkm \ar[d]^{\overline{\phi_{Q,S/\fkb S}^{d_{\ell-1}}}}\ar[r]^{\overline \epsilon} & (0):_{S/Q S}\fkm \ar[r] \ar@{^{(}->}[d]^{\overline{\phi_{Q,S/\fkb S}^{d_{\ell-1}}}}&0\\
		0\ar[r]&(0):_{\H^{d_{\ell-1}}_\m(\fka_{\ell-1})}\fkm\ar[r] & (0):_{\H^{d_{\ell-1}}_\m(R/\fkb R)}\fkm \ar[r] & (0):_{\H^{d_{\ell-1}}_\m(S/\fkb S)}\fkm\ar[r] &0 }
			\end{equation}
		of complexes of $R$-modules. Since $\ell((0):_{R/Q}\fkm)=\ell((0):_{\fka_{\ell-1}/Q \fka_{\ell-1}}\fkm)+\ell((0):_{S/Q S}\fkm)$, the above row of the commutative diagram \ref{eqn2} is exact. Since $S$ is Cohen-Macaulay,  $\overline{\phi_{Q,S/\fkb S}^{d_{\ell-1}}}$ is surjective. Thus, the lower row of the  commutative diagram \ref{eqn2}  is exact. Since $d_{\ell-1}<d$,  $\overline{\phi_{Q,\fka_{\ell-1}}^{d_{\ell-1}}}$ is surjective and so  is $\overline{\phi_{Q,R/\fkb}^{d_{\ell-1}}}$.
The exact sequence \ref{eqn3} induces the commutative diagram
$$\xymatrix{
		H^p(Q,\fka_{\ell-1})\ar@{^{}->}[d]^{\phi_{Q,\fka_{\ell-1}}^p}\ar[r]&H^p(Q,R/\fkb)\ar[d]^{\phi^{p}_{Q,R}}\\
		\H^p_\m(\fka_{\ell-1}) \ar[r]^{=} & \H^p_\m(R/\fkb),}$$
		for all $p< d_{\ell-1}$.
Since $\phi_{Q,\fka_{\ell-1}}^{p}$ are surjective on the socles for all $p<d$,  $\overline{\phi_{Q,R/\fkb}^{p}}$ are surjective for all $p<d_{\ell-1}$. Hence $\phi_{Q,R/\fkb}^{p}$ are surjective on the socles for all $p$. Thus, we have $\rmg(R/\fkb)\le \rmg(R)$, as required.
		
\end{proof}

\begin{lem}\label{tcr}
For all distinguished parameter ideal $\q \subseteq \m^{\rmg(R)}$ and $j=0,\ldots d$,  we have
	$$  r_d(R)\le r_{d-j}(R/\q_j).$$ 
\end{lem}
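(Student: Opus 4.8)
The plan is to compare the two socles through the common top Koszul cohomology module $R/\q$, which maps canonically to both $\H^d_\m(R)$ and $\H^{d-j}_\m(R/\q_j)$, and to manufacture a single $R$-linear map $\psi\colon\H^{d-j}_\m(R/\q_j)\to\H^d_\m(R)$ through which the socle of $\H^d_\m(R)$ is realized as the image of the socle of $\H^{d-j}_\m(R/\q_j)$. Since both target modules are the relevant top cohomologies, the whole argument reduces to bookkeeping with canonical maps into inductive limits, and the only hypothesis actually needed is $\q\subseteq\m^{\rmg(R)}$ (distinguishedness is not used).

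First I would record the two canonical maps. Because $H^d(\underline x;R)\cong R/\q$ and $\q=(\underline x)\subseteq\m^{\rmg(R)}$, Definition \ref{sur} guarantees that $\phi^{d,1}_{\underline x,R}\colon R/\q\to\H^d_\m(R)$ is surjective on socles. On the other hand, the images $\overline{x}_{j+1},\ldots,\overline{x}_d$ form a system of parameters of $R/\q_j$, with $H^{d-j}(\overline{x}_{j+1},\ldots,\overline{x}_d;R/\q_j)\cong R/\q$, which gives a second canonical map $\phi^{d-j,1}_{\overline{\underline x},R/\q_j}\colon R/\q\to\H^{d-j}_\m(R/\q_j)$. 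I do not need this second map to be surjective on socles; I use only that it is $R$-linear.

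Next I would build $\psi$. Writing $\H^d_\m(R)=\varinjlim_n R/(x_1^n,\ldots,x_d^n)$ and $\H^{d-j}_\m(R/\q_j)=\varinjlim_n R/(x_1,\ldots,x_j,x_{j+1}^n,\ldots,x_d^n)$, multiplication by $x_1^{n-1}\cdots x_j^{n-1}$ gives a well-defined map $\mu_n\colon R/(x_1,\ldots,x_j,x_{j+1}^n,\ldots,x_d^n)\to R/(x_1^n,\ldots,x_d^n)$, as one checks by noting that $x_1^{n-1}\cdots x_j^{n-1}$ carries each generator of the source ideal into $(x_1^n,\ldots,x_d^n)$. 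A direct computation shows $\{\mu_n\}_n$ commutes with the structure maps of the two inductive systems, since both relevant composites equal multiplication by $x_1^n\cdots x_j^n x_{j+1}\cdots x_d$, so it induces $\psi\colon\H^{d-j}_\m(R/\q_j)\to\H^d_\m(R)$. Since $\mu_1=\mathrm{id}_{R/\q}$, passing to the limit yields the commuting triangle $\phi^{d,1}_{\underline x,R}=\psi\circ\phi^{d-j,1}_{\overline{\underline x},R/\q_j}$.

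Finally I would chase socles. All three maps are $R$-linear, hence carry socles into socles. Given $w\in 0:_{\H^d_\m(R)}\m$, surjectivity of $\phi^{d,1}_{\underline x,R}$ on socles yields $v\in 0:_{R/\q}\m$ with $\phi^{d,1}_{\underline x,R}(v)=w$; then $\phi^{d-j,1}_{\overline{\underline x},R/\q_j}(v)$ lies in the socle of $\H^{d-j}_\m(R/\q_j)$ and $\psi$ maps it to $w$. Thus $\psi$ carries the socle of $\H^{d-j}_\m(R/\q_j)$ onto that of $\H^d_\m(R)$, so $r_d(R)=\ell(0:_{\H^d_\m(R)}\m)\le\ell(0:_{\H^{d-j}_\m(R/\q_j)}\m)=r_{d-j}(R/\q_j)$; the boundary cases $j=0$ and $j=d$ recover the trivial equality and the bound $r_d(R)\le\mathrm{ir}_R(\q)$. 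The main obstacle is precisely the construction of $\psi$, namely verifying that the $\mu_n$ are well defined and intertwine the two inductive systems so as to produce the commuting triangle; once that is in place, the socle estimate is purely formal.
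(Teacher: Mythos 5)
Your proof is correct and takes essentially the same route as the paper: both arguments factor the canonical map $R/\q\to\H^{d}_{\m}(R)$, which is surjective on socles because $\q\subseteq\m^{\rmg(R)}$, through a map $\H^{d-j}_{\m}(R/\q_j)\to\H^{d}_{\m}(R)$ and then apply $\Hom(k,\bullet)$ to transfer socle-surjectivity, yielding $r_d(R)\le r_{d-j}(R/\q_j)$. The only difference is in how the comparison map is built -- you construct $\psi$ explicitly as the limit of the multiplication maps $\mu_n$ between the Koszul-type quotients, whereas the paper assembles the same map $f_j$ by induction on $j$ from connecting homomorphisms attached to $0\to(0):_Rx_1\to R\to R\to R/\q_1\to 0$ -- and you correctly observe, consistently with the paper's proof, that distinguishedness of $\q$ is never actually used.
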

\begin{proof}
It follows from the exact sequence 
	$$\xymatrix{0\ar[r]&(0):_Rx_1 \ar[r] &R\ar[r]^{\ .x_1}&R\ar[r]&R/\q_1\ar[r]&0}$$
	that we have two commutative diagrams
\begin{center}
	$\xymatrix{R/\q \ar[r]^f\ar[d]^{g_1}&R/\q +(0):_Rx_1\ar[d]^g\\
		\H^{d-1}_\fkm(R/\q_1)\ar[r]^{f_1^\prime}&\H^{d}_\fkm(R)}$
	and $\xymatrix{R/\q \ar[r]^f\ar[d]^{g_0}&R/\q +0:_Rx_1\ar[d]^g\\
		\H^{d}_\fkm(R)\ar[r]^{\cong}&\H^{d}_\fkm(R).}$
	
\end{center}
	 Thus, it is easy to check that the diagram
		$$\xymatrix{R/\q \ar[d]^{g_1}\ar[rd]^{g_0}&\\
			\H^{d-1}_\fkm(R/\q_1)\ar[r]^{f_1}&\H^{d}_\fkm(R)}$$
			commutes, where $f_1=f_1^\prime$. Similarly, we have the commutative diagram
		 		$$\xymatrix{R/\q \ar[d]^{g_2}\ar[rd]^{g_1}&\\
		 			\H^{d-2}_\fkm(R/\q_2)\ar[r]^{f_2^\prime}&\H^{d-1}_\fkm(R/\q_1)}$$
		and so that the diagram
		 		$$\xymatrix{R/\q \ar[d]^{g_2}\ar[rd]^{g_0}&\\
		 			\H^{d-2}_\fkm(R/\q_2)\ar[r]^{f_2}&\H^{d}_\fkm(R)}$$
		 		 	 commutes, where $f_2=f_1\circ f_2^\prime$.	By induction on $j$, we have
		 		 	the commutative diagram
		 		 	$$\xymatrix{R/\q \ar[d]^{g_j}\ar[rd]^{g_0}&\\
		 		 		\H^{d-j}_\fkm(R/\q_j)\ar[r]^{f_j}&\H^{d}_\fkm(R),}$$ 
		 		 	where $f_j=f_{j-1}\circ f_j^\prime$.		 	
	After applying the functor $\Hom(k,\bullet)$, we obtain the commutative diagram 
		 		$$\xymatrix{(\q :\m)/\q \ar[d]^{g_i^*}\ar[rd]^{g_0^*}&\\
		 			0:_{\H^{d-j}_\fkm(R/\q_j)}\m\ar[r]^{f_i^*}&0:_{\H^{d}_\fkm(R)}\m.}$$
	 Since the map $g_0^*$ is surjective, so is the map $$f_j^*:\Hom(k,\H^{d-j}_\fkm(R/\q_j))\to\Hom(k,\H^{d}_\fkm(R)).$$ Therefore, we have $r_d(R)\le r_{d-j}(R/\q_j)$
	for all $j=0,\ldots d$. This completes the proof.
	
\end{proof}

Now, we  begin our study of the Hilbert coefficients of distinguished parameter ideals of $R$.

\begin{prop}\label{coe}
Assume that $R$ is sequentially Cohen-Macaulay
and $\q$ is a distinguished parameter ideal of $R$ such that $\mathrm{ir}_R(\q)=\sum\limits_{i \in \Bbb Z}r_i(R).$
Then 
we have
$$ e_j(I) - e_j(\mathfrak{q}) = (-1)^{j-1}r_{d-j+1}(R),$$
for all $j=0,\dots,d$, if $e_0(\m;R)> 1$ or $\q\subseteq \m^2$ and $d\ge 2$.
\end{prop}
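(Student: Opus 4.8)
The plan is to connect the Hilbert function of the socle ideal $I=\q:\m$ to the index of reducibility $\mathrm{ir}_R(\q^{n+1})$ already computed in Proposition \ref{P2.7}, the bridge being the reduction-number-one property of $I$ with respect to $\q$. First I would establish that $I^{n+1}=\q^n I$ for all $n\ge1$, equivalently $I^2=\q I$. Granting this, the exact sequence $0\to I^{n+1}/\q^{n+1}\to R/\q^{n+1}\to R/I^{n+1}\to0$ gives
$$\ell_R(R/I^{n+1})=\ell_R(R/\q^{n+1})-\ell_R(\q^n(\q:\m)/\q^{n+1}),$$
so the entire problem reduces to evaluating the length $\ell_R(\q^n(\q:\m)/\q^{n+1})$ as a polynomial in $n$.

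To compute that length I would use the chain $\q^{n+1}\subseteq\q^n(\q:\m)\subseteq\q^{n+1}:\m$, which is legitimate since $\q\subseteq\q:\m$ and since Lemma \ref{F2.5} (applied with $M=R$) gives $\q^{n+1}:\m=\q^n(\q:\m)+((0):_R\m)$. The outer quotient has length $\mathrm{ir}_R(\q^{n+1})$. For the top quotient, the second isomorphism theorem yields $(\q^{n+1}:\m)/\q^n(\q:\m)\cong ((0):_R\m)/\bigl(((0):_R\m)\cap\q^n(\q:\m)\bigr)$; since $(0):_R\m\subseteq\H^0_\m(R)$ has finite length while $\q^n(\q:\m)\subseteq\q^n$ and $\q^n\cap\H^0_\m(R)=0$ for $n\gg0$ by Artin--Rees, this intersection vanishes for $n\gg0$, so the top quotient has length $\ell_R((0):_R\m)=r_0(R)$. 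Subtracting and invoking Proposition \ref{P2.7},
$$\ell_R(\q^n(\q:\m)/\q^{n+1})=\mathrm{ir}_R(\q^{n+1})-r_0(R)=\sum_{i=1}^d r_i(R)\binom{n+i-1}{i-1}\qquad(n\gg0).$$

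Next I would substitute this into the Hilbert--Samuel polynomials of $I$ and of $\q$, both expanded in the basis $\{\binom{n+d-j}{d-j}\}_{0\le j\le d}$, and reindex the right-hand sum by $i=d-j+1$, so that $\binom{n+i-1}{i-1}=\binom{n+d-j}{d-j}$. Comparing coefficients of this basis in the resulting polynomial identity forces $e_0(I)=e_0(\q)$ together with $(-1)^j e_j(I)=(-1)^j e_j(\q)-r_{d-j+1}(R)$ for $1\le j\le d$, which is exactly $e_j(I)-e_j(\q)=(-1)^{j-1}r_{d-j+1}(R)$. The boundary case $j=0$ is consistent with the stated formula because $r_{d+1}(R)=0$, as $\H^{d+1}_\m(R)=0$ for $\dim R=d$.

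The main obstacle is the first step, $I^2=\q I$, and this is precisely where the hypothesis ``$e_0(\m;R)>1$ or ($\q\subseteq\m^2$ and $d\ge2$)'' is used; some such condition is unavoidable, since already in a discrete valuation ring with $\q=(t^a)$ one has $I=(t^{a-1})$ and $I^2\supsetneq\q I$. I would deduce $I^2=\q I$ by passing to the Cohen--Macaulay quotient $S=R/\fka_{\ell-1}$, which carries the same multiplicity $e_0(\m;R)=e_0(\m;S)$ (so $e_0(\m;R)>1$ forces $S$ to be non-regular), applying a Goto--Sakurai-type reduction-number result for socle ideals of parameter ideals in Cohen--Macaulay rings (cf.\ \cite{GoS03}), and then lifting the equality back to $R$ through the sequentially Cohen--Macaulay structure together with the vanishing $\fka_{\ell-1}\cap\fkb=0$ recorded in Setting \ref{2.6}; the alternative hypothesis $\q\subseteq\m^2$ with $d\ge2$ handles the remaining case in which $S$ is regular.
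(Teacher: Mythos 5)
Your proposal is correct and follows essentially the same route as the paper: the reduction-number-one property $I^2=\q I$ (hence $I^{n+1}=\q^n I$), the exact sequence $0\to I^{n+1}/\q^{n+1}\to R/\q^{n+1}\to R/I^{n+1}\to 0$, Lemma \ref{F2.5} combined with Proposition \ref{P2.7} to evaluate $\ell_R(\q^n(\q:\m)/\q^{n+1})=\mathrm{ir}_R(\q^{n+1})-r_0(R)$ for large $n$ (your Artin--Rees justification of this subtraction is in fact slightly more careful than the paper's), and a comparison of Hilbert--Samuel coefficients. The one divergence is cosmetic: where your final paragraph sketches a proof of $I^2=\q I$ by passing to $S$ and invoking a Goto--Sakurai-type result from \cite{GoS03}, the paper simply cites \cite[Theorem 3.2 and Corollary 3.5]{CGT13a}, which is precisely the published form of the lifting argument you outline and is exactly where the hypothesis ``$e_0(\m;R)>1$ or $\q\subseteq\m^2$ and $d\ge 2$'' is consumed.
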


\begin{proof}
	By Theorem 3.2 and Corollary 3.5 in \cite{CGT13a}, we have $I^2=\fkq I$ and so $I^{n+1}=\fkq^n I$ for all $n\ge 1$. Since $\mathfrak{q}^{n+1} \subseteq I^{n+1}$,   we have the  exact sequence
$$ 0 \to I^{n+1}/ \mathfrak{q}^{n+1} \to R/ \mathfrak{q}^{n+1} \to R/I^{n+1} \to 0.$$
	Thus, by Lemma \ref{F2.5}, we have
	\begin{eqnarray*}
\ell \left( R/ \mathfrak{q}^{n+1}\right) -  \ell \left( R/ I^{n+1}\right) =   \ell \left( \mathfrak{q}^{n}I/ \mathfrak{q}^{n+1}\right) 
 =  \ell \left( \frac{\mathfrak{q}^{n}(\mathfrak{q}:\mathfrak{m}) }{\mathfrak{q}^{n+1}}\right) = \ell \left( \frac{\mathfrak{q}^{n+1}:\mathfrak{m}}{\mathfrak{q}^{n+1}}\right) - \ell ((0): \m).
\end{eqnarray*}
By comparing the coefficients of the polynomials and Proposition \ref{P2.7}, we obtain 
$$ e_j(\q:\m) - e_j(\mathfrak{q}) = (-1)^{j-1}r_{d-j+1}(R),$$
for all $j=0,\dots,d$.

\end{proof}

\begin{cor}\label{4.300}
Suppose that $R$ is a sequentially Cohen-Macaulay ring. Then  there exists an integer $n \gg 0$ such that for every distinguished parameter ideals $\q\subseteq \m^{n}$ and $j=0,\dots,d$, we have
$$ e_j(I) - e_j(\mathfrak{q}) = (-1)^{j-1}r_{d-j+1}(R).$$
\end{cor}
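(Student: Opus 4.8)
The plan is to deduce Corollary \ref{4.300} from Proposition \ref{coe} by producing, uniformly, a threshold $n$ below which every distinguished parameter ideal $\q \subseteq \m^n$ simultaneously satisfies the two hypotheses needed to invoke the proposition: first, that $\mathrm{ir}_R(\q) = \sum_{i \in \bbZ} r_i(R)$, and second, that either $\e_0(\m;R) > 1$, or $\q \subseteq \m^2$ with $d \ge 2$. First I would observe that the quantity $\e_0(\m;R)$ is an invariant of $R$ that does not depend on $\q$, so I split into the two mutually exhaustive cases $\e_0(\m;R) > 1$ and $\e_0(\m;R) = 1$, and handle the threshold accordingly.

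The main input is Theorem \ref{3.800}: since $R$ is sequentially Cohen-Macaulay and is a homomorphic image of a Cohen-Macaulay local ring, there exists an integer $n_1 \gg 0$ such that every distinguished parameter ideal $\q \subseteq \m^{n_1}$ satisfies $\mathrm{ir}_R(\q) = \sum_{j \in \bbZ} r_j(R)$. This secures the first hypothesis of Proposition \ref{coe} for all $\q \subseteq \m^{n_1}$. For the second hypothesis, in the case $\e_0(\m;R) > 1$ there is nothing further to arrange, so I would simply take $n = n_1$ (or $n = \max\{n_1, 2\}$ to be safe). In the remaining case $\e_0(\m;R) = 1$, I would instead set $n = \max\{n_1, 2\}$; then any $\q \subseteq \m^n \subseteq \m^2$ automatically lies in $\m^2$, and since $R$ is sequentially Cohen-Macaulay of dimension $d$, the degenerate low-dimensional situation must be addressed — here I would note that the case $d=1$ forces $R$ Cohen-Macaulay of multiplicity one, hence regular, where the conclusion of the proposition holds trivially (both sides reduce to the corresponding $r$-term), while for $d \ge 2$ the hypothesis $\q \subseteq \m^2$ of Proposition \ref{coe} is exactly what the choice of $n$ guarantees.

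With the threshold $n$ fixed in each case, for every distinguished parameter ideal $\q \subseteq \m^n$ both hypotheses of Proposition \ref{coe} are met, and the proposition yields directly the asserted equality
\[
\e_j(I) - \e_j(\q) = (-1)^{j-1} r_{d-j+1}(R)
\]
for all $j = 0, \dots, d$, where $I = \q : \m$ as in Setting \ref{2.6}. Taking $n$ to be the maximum of the finitely many thresholds produced above gives the single integer required by the statement.

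The step I expect to be the main obstacle is the bookkeeping in the case $\e_0(\m;R) = 1$: one must verify that the $d = 1$ (and any low-dimensional boundary) cases are not excluded by the dichotomy in Proposition \ref{coe}, since the proposition only covers $d \ge 2$ under the $\q \subseteq \m^2$ alternative. The honest fix is to check that $\e_0(\m;R) = 1$ together with sequential Cohen-Macaulayness constrains $R$ enough (in particular when $d=1$, multiplicity one forces regularity after passing to $R/\fka_{\ell-1}$, and the unmixed/lower pieces contribute only through the $r_j$-terms) that the formula still holds by a direct length computation. If one prefers to avoid this, the cleanest route is to absorb the boundary entirely into the choice of $n$ by remarking that for $n$ large the relevant distinguished parameter ideals never violate the proposition's hypotheses — but making that remark rigorous is precisely where the care is needed.
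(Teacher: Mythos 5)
Your main line is exactly the paper's: the entire published proof of Corollary \ref{4.300} is that it is immediate from Theorem \ref{3.800} (which supplies a threshold $n_1$ so that every distinguished parameter ideal $\q\subseteq\m^{n_1}$ satisfies $\mathrm{ir}_R(\q)=\sum_{j\in\Bbb Z}r_j(R)$) combined with Proposition \ref{coe}, taking in effect $n=\max\{n_1,2\}$ so that $\q\subseteq\m^2$ as well. For $d\ge 2$, or whenever $\e_0(\m;R)>1$, your argument is complete and coincides with the intended one.

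The genuine defect is your disposal of the boundary case $\e_0(\m;R)=1$ with $d=1$, and it is wrong on two counts. First, sequential Cohen--Macaulayness together with $\e_0(\m;R)=1$ does not force $R$ to be Cohen--Macaulay, let alone regular: only the unmixed quotient $R/\H^0_\m(R)$ is regular by Nagata's theorem, and $R=k[[x,y]]/(xy,y^2)$ is a sequentially Cohen--Macaulay ring of dimension one with $\e_0(\m;R)=1$ that is not Cohen--Macaulay. Second, and more seriously, the conclusion does not ``hold trivially'' in the regular case --- it fails outright. Take $R=k[[x]]$ and $\q=(x^m)$ with $m\ge 2$; then $\q:\m=(x^{m-1})$, so $\e_1(\q:\m)-\e_1(\q)=0$ while $r_1(R)=1$, and likewise $\e_0(\q:\m)=m-1\neq m=\e_0(\q)$, so the asserted equality fails for $j=0$ and $j=1$ for \emph{every} parameter ideal, and no choice of threshold $n$ can repair it. This shows the dichotomy ``$\e_0(\m;R)>1$, or $\q\subseteq\m^2$ and $d\ge 2$'' in Proposition \ref{coe} is essential and cannot be absorbed into the choice of $n$, contrary to the hope expressed at the end of your proposal. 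Your instinct that this corner needed attention was sound --- the paper's one-line proof glosses over it, and the corollary as literally stated silently carries the proposition's proviso --- but the honest fix is to restrict the statement to $d\ge 2$ or $\e_0(\m;R)>1$ (which covers every later use in the paper; for $d=1$ condition $(2)$ of Theorem \ref{main1} is vacuous), not to claim the formula by a direct computation in dimension one, where it is false.
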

\begin{proof}
This is  immediate from Proposition \ref{coe} and Theorem \ref{3.800}.
\end{proof}

\begin{lem}\label{4.5}
Assume that $S$ is Cohen-Macaulay and  $\q$  is a distinguished parameter ideal of $R$ such that
$[\q+\fka_{\ell-1}]:\fkm=\q:\m+\fka_{\ell-1}.$
 Then 
 we have
$$ e_{j}(I;R/\fkb)-e_{j}(\fkq;R/\fkb)=
 \begin{cases}(-1)^{d-d_{\ell-1}}((e_{d-d_{\ell-1}+j}(I;R)-e_{d-d_{\ell-1}+j}(\q;R)))+r_d(R)&\text{if $j=1$,}
\\
(-1)^{d-d_{\ell-1}} (e_{d-d_{\ell-1}+j}(I;R)-e_{d-d_{\ell-1}+j}(\q;R))&\text{if } j\ge 2.\\
 \end{cases}$$  
  
\end{lem}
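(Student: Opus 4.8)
The plan is to recast the statement as a comparison of Hilbert difference functions and to read off both sides from the two short exact sequences that present $R$ and $R/\fkb$ over the Cohen--Macaulay quotients $S$ and $S/\fkb S$. For a finitely generated $R$-module $M$ of dimension $s$ I set $\Delta_M(n)=\ell_R(M/\q^{n+1}M)-\ell_R(M/I^{n+1}M)$; since $e_0(I;M)=e_0(\q;M)$ this is, for $n\gg0$, a polynomial of degree $<s$ whose coefficient of $\binom{n+s-j}{s-j}$ equals $(-1)^{j-1}\big(e_j(I;M)-e_j(\q;M)\big)$. With $c:=d-d_{\ell-1}$, the identity to be proved becomes a relation between the coefficients of $\Delta_{R/\fkb}$ (degree $<d_{\ell-1}$) and of $\Delta_R$ (degree $<d$), in which the factor $(-1)^{c}$ together with the index shift by $c$ arises solely from the change of binomial basis $\binom{n+d-(c+j)}{d-(c+j)}=\binom{n+d_{\ell-1}-j}{d_{\ell-1}-j}$.

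First I would split both difference functions additively. The sequence $0\to\fka_{\ell-1}\to R\to S\to0$ gives, for each power, $0\to\fka_{\ell-1}/(\fka_{\ell-1}\cap J^{n+1}R)\to R/J^{n+1}R\to S/J^{n+1}S\to0$, hence $\Delta_R=\Delta_{\fka_{\ell-1}}^{R}+\Delta_S$, where $\Delta_{\fka_{\ell-1}}^{R}$ uses the induced filtration $\{\fka_{\ell-1}\cap J^{n+1}R\}$; the sequence \ref{eqn3} gives likewise $\Delta_{R/\fkb}=\Delta_{\fka_{\ell-1}}^{R/\fkb}+\Delta_{S/\fkb S}$. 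The hypothesis $[\q+\fka_{\ell-1}]:\fkm=\q:\fkm+\fka_{\ell-1}$ identifies the image of $I$ in $S$ with the socle ideal $\q S:\fkm$, and I would derive the corresponding statement for $S/\fkb S$ exactly as in Lemma \ref{3.444}; this is what guarantees that the quotient filtrations above are the genuine socle-ideal filtrations of the Cohen--Macaulay modules $S$ and $S/\fkb S$. Using Proposition \ref{P2.7} and Lemma \ref{F2.5} in the Cohen--Macaulay case I then obtain the single-term polynomials $\Delta_S(n)=r_d(S)\binom{n+d-1}{d-1}$ and $\Delta_{S/\fkb S}(n)=r_{d_{\ell-1}}(S/\fkb S)\binom{n+d_{\ell-1}-1}{d_{\ell-1}-1}$, both supported at $j=1$.

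Next I would evaluate these two constants. Since $\dim\fka_{\ell-1}<d$ the top local cohomology is unchanged, $\H^d_\fkm(R)\cong\H^d_\fkm(S)$, so $r_d(R)=r_d(S)$; and because $x_{d_{\ell-1}+1},\dots,x_d$ is a regular sequence on the Cohen--Macaulay module $S$, passing to $S/\fkb S$ preserves the type, giving $r_{d_{\ell-1}}(S/\fkb S)=r_d(S)=r_d(R)$. Thus $\Delta_{S/\fkb S}$ contributes exactly $r_d(R)$ to the coefficient at $j=1$ of $\Delta_{R/\fkb}$ and nothing for $j\ge2$; this single term is the origin of the discrepancy between the two cases of the statement.

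The heart of the argument, and the step I expect to be the main obstacle, is to show that the remaining pieces coincide, $\Delta_{\fka_{\ell-1}}^{R}(n)=\Delta_{\fka_{\ell-1}}^{R/\fkb}(n)$ for $n\gg0$. Writing $\Delta_{\fka_{\ell-1}}^{R/\fkb}-\Delta_{\fka_{\ell-1}}^{R}$ as the difference of the two ``defects'' $\ell_R\big((\fka_{\ell-1}\cap(J^{n+1}R+\fkb))/(\fka_{\ell-1}\cap J^{n+1}R)\big)$ for $J=I$ and $J=\q$, it suffices to prove that this defect is independent of $J$, so that it cancels. Passing to $S$, an element of $\fka_{\ell-1}$ lying in $J^{n+1}R+\fkb$ is controlled by $J^{n+1}S\cap\fkb S$; here I would use $\fkb\subseteq\q\subseteq I$, the regularity of the sequence generating $\fkb$ on the Cohen--Macaulay module $S$, and a Valabrega--Valla type identity (for $\q$, $\q^{n+1}S\cap\fkb S=\fkb\,\q^{n}S$, and its analogue for the socle ideal $IS=\q S:\fkm$) to lift representatives back to $R$ modulo $\fka_{\ell-1}$ and conclude that the two defects have equal length. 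Granting this, the coefficient of $\binom{n+d_{\ell-1}-j}{d_{\ell-1}-j}$ in $\Delta_{R/\fkb}$ equals, for $j\ge2$, that of $\binom{n+d-(c+j)}{d-(c+j)}$ in $\Delta_R$, which upon restoring the sign conventions yields $e_j(I;R/\fkb)-e_j(\q;R/\fkb)=(-1)^{c}\big(e_{c+j}(I;R)-e_{c+j}(\q;R)\big)$, while for $j=1$ the Cohen--Macaulay term of Paragraph three adds the extra $r_d(R)$. The delicate points I expect to need care are the exactness of these socle/filtration comparisons for all large $n$ rather than only $n=0$, and the Valabrega--Valla control of the induced filtrations on $\fka_{\ell-1}$ inside $R$ versus inside $R/\fkb$.
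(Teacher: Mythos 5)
Your overall architecture coincides with the paper's proof: both split the comparison over the exact sequences $0\to\fka_{\ell-1}\to R\to S\to 0$ and $0\to\fka_{\ell-1}\to R/\fkb\to S/\fkb S\to 0$, compute the Cohen--Macaulay socle-ideal Hilbert functions of $S$ and $S/\fkb S$ in closed form (your $\Delta_S(n)=r_d(S)\binom{n+d-1}{d-1}$ and $\Delta_{S/\fkb S}(n)=r_{d_{\ell-1}}(S/\fkb S)\binom{n+d_{\ell-1}-1}{d_{\ell-1}-1}$, together with $r_d(R)=r_d(S)=r_{d_{\ell-1}}(S/\fkb S)$, all of which match the paper), and then perform the sign and index-shift bookkeeping, which you do correctly. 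The divergence is at the step you yourself flag as the main obstacle, and there your sketch has a genuine gap: the paper proves the stronger statement that the defect \emph{vanishes}, namely $(I^n+\fkb)\cap\fka_{\ell-1}=I^n\cap\fka_{\ell-1}$ for all $n\ge1$ (and similarly for $\q$), by an element-by-element induction on $t=d-d_{\ell-1}$. The mechanism is: $(IS)^2=(\q S)(IS)$ by \cite[Theorem 3.7]{CHV98}, so $\operatorname{gr}_{IS}(S)$ is Cohen--Macaulay and $I^nS:x_d=I^{n-1}S$, whence $(I^n+\fka_{\ell-1}):x_d=I^{n-1}+\fka_{\ell-1}$; then for $a=b+x_dc\in\fka_{\ell-1}$ with $b\in I^n$ one gets $x_dc\in I^n$ \emph{because $x_d\fka_{\ell-1}=0$} --- this is precisely where the hypothesis that $\q$ is distinguished is used.

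Your proposed lifting never invokes distinguishedness, and as stated it does not close. From $\bar c\in J^{n+1}S\cap\fkb S=\fkb J^nS$ you only recover $c\in\fkb J^n+\fka_{\ell-1}$, and the residual $\fka_{\ell-1}$-ambiguity is exactly what must be killed; the paper kills it with $x_d\fka_{\ell-1}=0$, and in your version one needs $\fkb\cap\fka_{\ell-1}=0$, which is itself a consequence of $\fkb\fka_{\ell-1}=0$ (distinguishedness) together with the images of $x_{d_{\ell-1}},\ldots,x_d$ forming a regular sequence on $S$. Your stated toolkit ($\fkb\subseteq\q\subseteq I$, regularity on $S$, Valabrega--Valla) omits this input, without which the conclusion cannot be reached. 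A second, smaller gap: your Valabrega--Valla analogue for the socle filtration, $I^{n+1}S\cap\fkb S=\fkb I^nS$, is not automatic; it requires $\operatorname{gr}_{IS}(S)$ Cohen--Macaulay, i.e.\ again the Corso--Huneke--Vasconcelos equality $(IS)^2=(\q S)(IS)$, which the paper cites explicitly and you assume silently. Once these two ingredients are supplied, your lifting in fact shows each defect is zero (not merely independent of $J$, which is the weaker statement you aimed for and which would also suffice), and your argument then becomes essentially the paper's proof in a different notation.
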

\begin{proof}

Let $t=d-d_{\ell-1}$. Since $[\q+\fka_{\ell-1}]:\fkm=\q:\m+\fka_{\ell-1}$,  we have $IS=\q S:\m S$.

\begin{claim}\label{calim 1}
$(I^n+\fkb)\cap \fka_{\ell-1}=I^n\cap \fka_{\ell-1}$, for all $n\ge 1$.
\end{claim}
\begin{proof}
First, we show that $[I^n+(x_d)]\cap \fka_{\ell-1} =I^n\cap \fka_{\ell-1}$. Indeed, since $\fkq\subseteq \m^2$ and $S$ is Cohen-Macaulay, we have $(IS)^2 = (\q S)(IS)$ by   \cite[Theorem 3.7]{CHV98}, so that $\operatorname{gr}_{IS}(S)$ is a Cohen-Macaulay ring. Therefore, we have $I^nS:x_d = I^{n -1}S$, for all $n \in \Bbb Z$. Thus,  we have $(I^n+\fka_{\ell-1}):x_d=I^{n-1}+\fka_{\ell-1}$.

Let $a\in [I^n+(x_d)]\cap \fka_{\ell-1}$. Write $a=b+x_dc$ for some $b\in I^n$, $c\in R$. Then $c\in (I^n+\fka_{\ell-1}):x_d=I^{n-1}+\fka_{\ell-1}$. Thus since $\q$ is a distinguish parameter ideal of $R$, we have $x_d\fka_{\ell-1}=0$ and so
$x_dc\in  I^n$. Therefore $a\in I^n\cap \fka_{\ell-1}$. Hence $[I^n+(x_d)]\cap \fka_{\ell-1} =I^n\cap \fka_{\ell-1}$. 

Now, we shall demonstrate our result by induction on $t=d-d_{\ell-1}$. In the case $t = 1$ there
is nothing to prove. So assume inductively that $t > 1$ and  the desired result has been established when $t-1$.  Let $\overline R = R/(x_d)$. For an ideal $J$ of $R$, we denote $\overline J=(J+(x_d))/(x_d)$. 
 Then  
$\overline \fka_{\ell-1}$ is the unmixed component of $\overline R$. Thus,
$\overline R/\overline \fka_{\ell-1}=S/(x_d)S$ is a  Cohen-Macaulay ring  and $\overline \q$ is a distinguished parameter ideal of $R$ such that
$[\overline\q+\overline\fka_{\ell-1}]:\overline\fkm=\overline\q:\overline\m+\overline\fka_{\ell-1}.$
By the hypothesis of induction, 
we have $(\overline I^n+\overline\fkb)\cap \overline\fka_{\ell-1}=\overline I^n\cap \overline\fka_{\ell-1}$ for all $n\ge 1$. Thus, $[I^n+\fkb]\cap [\fka_{\ell-1}+(x_d)]=[I^n+(x_d)]\cap [\fka_{\ell-1}+(x_d)]$ for all $n\ge 1$.
Therefore, we have
$$\begin{aligned}[]
[I^n+\fkb] \cap \fka_{\ell-1} &=  [I^n+\fkb]\cap  [\fka_{\ell-1}+(x_d)]\cap \fka_{\ell-1}\\
&=[I^n+(x_d)]\cap [\fka_{\ell-1}+(x_d)] \cap \fka_{\ell-1}\\
&=[I^n+(x_d)] \cap \fka_{\ell-1}= I^n\cap \fka_{\ell-1}
\end{aligned}
$$
for all $n\ge 1$.
\end{proof}

 It follows from the above claim  that the sequences
 $$\xymatrix{0\ar[r]& \fka_{\ell-1}/(I^n\cap \fka_{\ell-1})\ar[r]& R/(\fkb+I^n)\ar[r]& S/(\fkb+I^n)S\ar[r]& 0}$$
$$\text{ and }\xymatrix{0\ar[r]& \fka_{\ell-1}/(I^n\cap \fka_{\ell-1})\ar[r]& R/I^n\ar[r]& S/I^nS\ar[r]& 0}$$
 are exact for all $n\ge 0$.
Therefore we have
$$\ell(R/I^n)-\ell(S/I^nS)=\ell(R/(\fkb+I^n))-\ell(S/(\fkb+I^n)S)$$
Since $S$ is Cohen-Macaulay, $S$ and $S/\fkb S$ are sequentially Cohen-Macaulay.
By Proposition \ref{coe}, we have
$$\begin{aligned}
\ell(S/I^nS)&=e_0(I;S)\binom{n+d}{d}-r_d(S)\binom{n+d-1}{d-1}\\
\text{ and \quad}
\ell(S/(\fkb+I^n)S)&=e_0(I;S)\binom{n+d_{\ell-1}}{d_{\ell-1}}-r_d(S)\binom{n+d_{\ell-1}-1}{d_{\ell}-1},
\end{aligned}$$
for all $n\ge 0$. Consequently,  on comparing the coefficients of the polynomials in the above equality, we have 
$$ e_{j}(I;R/\fkb)=
 \begin{cases}(-1)^{t}e_{t+1}(I;R)+r_d(S) &\text{if $j=1$,}
\\
(-1)^{t}e_{t+j}(I;R)&\text{if } j\ge 2.\\
 \end{cases}$$  
Similarly, we have $$ e_{j}(\q;R/\fkb)=(-1)^{t}e_{t+j}(I;R),$$  
for all $1\le j\le d_{\ell-1}$. It follows that
$$ e_{j}(I;R/\fkb)-e_{j}(\fkq;R/\fkb)=
 \begin{cases}(-1)^{t}((e_{t+j}(I;R)-e_{t+j}(\q;R)))+r_d(R)&\text{if $j=1$,}
\\
(-1)^{t} (e_{t+j}(I;R)-e_{t+j}(\q;R))&\text{if } j\ge 2.\\
 \end{cases}$$

\end{proof}

\begin{cor}\label{cor4.61}
Assume that $S$ is Cohen-Macaulay. 
 Then  we have 
$$ e_{j}(I;R/\fkb)-e_{j}(\fkq;R/\fkb)=
 \begin{cases}(-1)^{d-d_{\ell-1}}((e_{d-d_{\ell-1}+j}(I;R)-e_{d-d_{\ell-1}+j}(\q;R)))+r_d(R)&\text{if $j=1$,}
\\
(-1)^{d-d_{\ell-1}} (e_{d-d_{\ell-1}+j}(I;R)-e_{d-d_{\ell-1}+j}(\q;R))&\text{if } j\ge 2,\\
 \end{cases}$$  
   for all distinguished parameter ideals $\q\subseteq\m^{\rmg(R)}$ of $R$.

\end{cor}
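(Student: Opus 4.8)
The plan is to deduce the corollary directly from Lemma \ref{4.5} by showing that the extra hypothesis appearing there, namely $[\q+\fka_{\ell-1}]:\fkm=\q:\m+\fka_{\ell-1}$, is automatically satisfied as soon as $\q\subseteq\m^{\rmg(R)}$. In other words, the whole task reduces to verifying this single socle identity for an arbitrary distinguished parameter ideal contained in $\m^{\rmg(R)}$; once it is in hand, the displayed formula is exactly the conclusion of Lemma \ref{4.5}.

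First I would record that, by the very definition of the g-invariant (Definition \ref{sur}), the assumption $\q\subseteq\m^{\rmg(R)}$ guarantees that the canonical map $\phi^{d}_{\q,R}:H^{d}(\q,R)\to H^{d}_{\m}(R)$ is surjective on socles; here I use the identification $H^{d}(\q,R)\cong R/\q$ of the top Koszul cohomology, so that this map is precisely the vertical arrow $R/\q\to\H^{d}_{\m}(R)$. Next I would invoke the short exact sequence $0\to\fka_{\ell-1}\to R\to S\to 0$. Since $\dim\fka_{\ell-1}<d$, both $H^{d}_{\m}(\fka_{\ell-1})$ and $H^{d+1}_{\m}(\fka_{\ell-1})$ vanish, so the associated long exact sequence yields $H^{d}_{\m}(R)\cong H^{d}_{\m}(S)$; and because $S$ is Cohen-Macaulay the map $\phi^{d}_{\q,S}$ is injective on socles. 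This is exactly the commutative diagram already assembled in the opening part of the proof of Lemma \ref{3.444}.

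A short diagram chase then gives the desired identity. Given $x\in(0):_{S/\q S}\m$, the socle surjectivity of $\phi^{d}_{\q,R}$ produces $y\in(0):_{R/\q R}\m$ with $\phi^{d}_{\q,R}(y)=\phi^{d}_{\q,S}(x)$, and the injectivity of $\phi^{d}_{\q,S}$ forces $\overline{\epsilon}(y)=x$. Letting $x$ range over the whole socle yields $[\fka_{\ell-1}+\q]:_{R}\fkm=\fka_{\ell-1}+[\q:_{R}\fkm]$, the reverse containment being trivial. With this equality established, the hypothesis of Lemma \ref{4.5} is met for every distinguished parameter ideal $\q\subseteq\m^{\rmg(R)}$, and applying that lemma gives the stated formula.

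The only genuine content, and hence the step I would watch most carefully, is the extraction of this socle identity, which is really a repetition of the diagram chase in Lemma \ref{3.444}: one must confirm that the surjectivity built into $\rmg(R)$, together with the Cohen-Macaulayness of $S$ supplying the injectivity of $\phi^{d}_{\q,S}$ on socles, are precisely what is needed, and that no further smallness or distinguishedness condition on $\q$ intervenes at this point. Once that is clear, the corollary is a purely formal consequence of Lemma \ref{4.5}.
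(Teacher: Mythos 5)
Your proposal is correct, and its overall skeleton agrees with the paper's: both reduce the corollary to Lemma \ref{4.5}, so the whole question is how to verify the hypothesis $[\q+\fka_{\ell-1}]:\fkm=\q:\m+\fka_{\ell-1}$ for $\q\subseteq\m^{\rmg(R)}$. Where you diverge is in that verification. The paper's one-line proof invokes Lemma \ref{2.10}, i.e.\ the additivity $\mathrm{ir}_R(\q)=\mathrm{ir}_{\fka_{\ell-1}}(\q)+\mathrm{ir}_{S}(\q)$ quoted from \cite{Tru19}; combined with the exactness of $0\to \fka_{\ell-1}/\q\fka_{\ell-1}\to R/\q\to S/\q S\to 0$ (which uses that $\underline x$ is a regular sequence on the Cohen--Macaulay module $S$, so $\q\cap\fka_{\ell-1}=\q\fka_{\ell-1}$), a length count on the socle sequence forces the map $(0):_{R/\q}\m\to(0):_{S/\q S}\m$ to be surjective, which is exactly the socle identity --- this is the mechanism replayed explicitly in the proof of Theorem \ref{main1}. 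You instead rederive the identity from first principles: $\q\subseteq\m^{\rmg(R)}$ makes $\phi^{d}_{\q,R}\colon R/\q\to\H^d_\m(R)$ surjective on socles by Definition \ref{sur}, the vanishing $\H^d_\m(\fka_{\ell-1})=\H^{d+1}_\m(\fka_{\ell-1})=0$ identifies $\H^d_\m(R)$ with $\H^d_\m(S)$, and Cohen--Macaulayness of $S$ gives injectivity of $S/\q S\to \H^d_\m(S)$, so the chase closes; this is verbatim the argument the paper itself runs at the start of the proof of Lemma \ref{3.444}. Your route buys self-containedness (no appeal to \cite[Lemma 2.12]{Tru19}) and makes visible that distinguishedness of $\q$ plays no role in the socle identity itself, only in Lemma \ref{4.5}; the paper's route is shorter by citation and reuses a lemma it needs anyway. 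Since the proof of the cited lemma rests on the same two inputs (socle surjectivity encoded in $\rmg(R)$ and injectivity from the Cohen--Macaulayness of $S$), the difference is one of packaging rather than of mathematical substance, and your argument is sound as written.
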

\begin{proof}
This is immediate from Proposition \ref{4.5} and Lemma \ref{2.10}.
\end{proof}



\begin{prop}\label{P4.60}
Suppose that $\q$ is a distinguished parameter ideal of $R$ such that $\q \subseteq \m^{\rmg(R)}$ and
	$$ e_1(I)-e_1(\fkq)\leq r_{d}(R).$$
 Then  $S$ is Cohen-Macaulay.
\end{prop}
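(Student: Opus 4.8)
The plan is to realize Cohen--Macaulayness of $S$ as the equality case of a general lower bound for $e_1(I)-e_1(\q)$. The starting point is that, by the exact sequence $0\to I^{n+1}/\q^{n+1}\to R/\q^{n+1}\to R/I^{n+1}\to 0$, the function $\ell(I^{n+1}/\q^{n+1})$ is, for $n\gg 0$, a polynomial of degree at most $d-1$ whose normalized leading coefficient is exactly $e_1(I)-e_1(\q)$ (the $e_0$--terms cancel because $\q\subseteq I\subseteq\q:\m$). The hypothesis thus says precisely that this leading coefficient is at most $r_d(R)$.

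I would first pass to the unmixed quotient $S=R/\fka_{\ell-1}$. The sequence $0\to\fka_{\ell-1}\to R\to S\to 0$ together with $\dim\fka_{\ell-1}=d_{\ell-1}<d$ gives $\H^d_\m(R)\cong\H^d_\m(S)$, hence $r_d(R)=r_d(S)$. Comparing the Hilbert polynomials of $R$ and $S$ through this sequence, $\fka_{\ell-1}$ contributes a polynomial of degree $d_{\ell-1}\le d-1$ with top coefficient $e_0(\,\cdot\,;\fka_{\ell-1})$; since $\q\subseteq I$ forces $e_0(I;\fka_{\ell-1})\le e_0(\q;\fka_{\ell-1})$, these contributions do not increase the difference, and one obtains $e_1(I)-e_1(\q)\ge e_1(IS;S)-e_1(\q S;S)$. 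The hypothesis therefore descends to $e_1(IS;S)-e_1(\q S;S)\le r_d(S)$, so it suffices to treat the unmixed ring $S$.

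The crux is the general lower bound $e_1(IS;S)-e_1(\q S;S)\ge r_d(S)$, with equality only when $S$ is Cohen--Macaulay. Writing $J=\q S$ and $I'=IS$, one has $\m I'\subseteq J$, whence the chain $J^{n+1}\subseteq J^nI'\subseteq (I')^{n+1}$ and $J^nI'\subseteq J^{n+1}:\m$; consequently $\ell((I')^{n+1}/J^{n+1})\ge\ell(J^nI'/J^{n+1})$ and $\ell(J^nI'/J^{n+1})=\mathrm{ir}_S(J^{n+1})-\ell((J^{n+1}:\m)/J^nI')$. The point $\q\subseteq\m^{\rmg(R)}$ enters here through Definition \ref{sur}: the canonical maps are surjective on socles, so the socle of $\H^d_\m(S)$, of length $r_d(S)$, is detected asymptotically inside $\Soc(S/J^{n+1})$, forcing the leading coefficient of $\mathrm{ir}_S(J^{n+1})$ to be at least $r_d(S)$, while the correction $\ell((J^{n+1}:\m)/J^nI')$ is of lower order. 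This yields the bound, and combined with the hypothesis it forces equality throughout; in particular it forces $J^nI'=J^{n+1}:\m$ to top order, which is exactly the identity exploited in the Cohen--Macaulay computation behind Lemma \ref{F2.5} and Proposition \ref{P2.7}.

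Finally I would read off $\depth S=d$. The forced top--order identity, fed back through the socle calculation, yields $\mathrm{ir}_S(\q S)=\sum_{j}r_j(S)$ for the unmixed ring $S$; by Theorem \ref{3.800} this makes $S$ sequentially Cohen--Macaulay, and since $S$ is unmixed its dimension filtration is trivial, so $S$ is Cohen--Macaulay. To make the ``top order'' comparisons rigorous I would argue by induction on $d$, the base case $d=1$ being automatic because an unmixed one--dimensional ring has positive depth and is therefore Cohen--Macaulay; the inductive step descends modulo a suitable parameter, using Lemma \ref{tcr} (in the form $r_d(R)\le r_1(R/\q_1)$) to control the socle numbers and the control of the $\rmg$--invariant under such quotients as in Lemma \ref{3.444}. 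The main obstacle is exactly the middle step: in the absence of Cohen--Macaulayness the ideals $\q^{n+1}$, $\q^n(\q:\m)$ and $\q^{n+1}:\m$ are genuinely distinct, and the whole argument hinges on proving that their successive differences grow strictly slower than $n^{d-1}$, so that the lower bound is sharp precisely in the Cohen--Macaulay case.
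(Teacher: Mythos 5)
There is a genuine gap, and you name it yourself in your final sentence: the entire argument funnels through the claimed ``general lower bound'' $e_1(IS;S)-e_1(\q S;S)\ge r_d(S)$, with equality only in the Cohen--Macaulay case, and specifically through the assertion that the correction term $\ell\bigl((J^{n+1}:\m)/J^nI'\bigr)$ grows strictly slower than $n^{d-1}$. This is not a technical verification left to the reader; it is essentially the proposition itself. Note that the lower bound does appear in the paper as Corollary \ref{P4.3}, but there it is \emph{deduced from} Theorem \ref{main1}, whose proof rests on this very Proposition \ref{P4.60} --- so any attempt to take the bound as an input is circular. The paper's actual proof avoids asymptotic comparison of the three ideals $\q^{n+1}\subseteq\q^n(\q:\m)\subseteq\q^{n+1}:\m$ in dimension $d$ altogether: after disposing of the case $e_0(\m;R)=1$ via Nagata's theorem (a case your sketch never treats --- and note that $\m I^n=\m\q^n$, which you use in the form $\m I'\subseteq J$ and implicitly in the cancellation of the $e_0$-terms, requires $e_0(\m;R)>1$ by Goto--Sakurai, not merely $\q\subseteq I\subseteq\q:\m$), it chooses the distinguished system to be a \emph{Goto sequence} and descends by superficial elements to $B=R/\q_{d-1}$ of dimension one, where sequential Cohen--Macaulayness is automatic and Proposition \ref{P2.7} computes $\mathrm{ir}_B(\q^{n+1})$ exactly. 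This yields the chain $r_d(R)\ge e_1(I;R)-e_1(\q;R)\ge e_1(IB;B)-e_1(\q B;B)\ge r_1(B)$, which Lemma \ref{tcr} ($r_d(R)\le r_1(B)$) closes into a forced equality. That equality is then converted not into an index-of-reducibility statement but into \emph{injectivity} of the socle map $(0):_{\H^1_\m(B)}\m\to(0):_{\H^2_\m(A)}\m$ with $A=R/\q_{d-2}$, whence, via the multiplication sequence by $x_{d-1}$ on $A/N$, one gets $\H^1_\m(A/N)=x_{d-1}\H^1_\m(A/N)=0$; Fact \ref{property}(3) (a property of Goto sequences, absent from your plan) then lifts the Cohen--Macaulayness of $A/N$ to $S$.

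Two further steps of your plan would fail even granting the middle bound. First, your endgame invokes Theorem \ref{3.800}, direction (ii)$\Rightarrow$(i), from the single equality $\mathrm{ir}_S(\q S)=\sum_j r_j(S)$; but that theorem requires the socle equality for \emph{every} distinguished parameter ideal contained in $\m^n$, not for one ideal, and you would additionally have to check that $\q S$ sits inside the relevant power of the maximal ideal of $S$, i.e.\ compare $\rmg(S)$ with $\rmg(R)$ --- your proposed tool for such comparisons, Lemma \ref{3.444}, \emph{assumes} $S$ is Cohen--Macaulay, which is the conclusion sought, so the inductive descent as you describe it is circular. Second, the base case of your induction should be $d=2$, not $d=1$: the statement concerns $S=R/\fka_{\ell-1}$ for a possibly non-unmixed $R$, and in dimension one the assertion is that $S$ (not $R$) is Cohen--Macaulay, which is where the paper starts its induction in Theorem \ref{main1}. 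Your reduction to the unmixed quotient in the second paragraph is the one sound and genuinely checkable piece of the proposal, but it does not touch the crux.
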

\begin{proof}
	In the case  $e_0(\m;R)=1$, we have  $e_0(\m;S)=1$, because $\dim \fka_{\ell-1}<\dim R$. And so the result in this case follows from $S$ is unmixed and Theorem 40.6 in \cite{Nag62}. Thus we suppose henceforth in this proof  that  $e_0(\m;R)>1$.

By Corollary 2.8. in \cite{Tru19}, 
there exists a Goto sequence $x_1, x_2, \ldots, x_{d}$ on $R$. Then by Fact \ref{property} 2),  $x_1, x_2, \ldots, x_d$ is a distinguished system of parameter of $R$ such that $\q=(x_1,x_2,\ldots,x_d)$ 
	satisfies the following conditions
	\begin{enumerate}
		\item[$(1)$]   $\Ass(C_i/\q_j C_i)\subseteq \Assh(C_i/\q_j C_i)\cup \{\fkm\}$, for all $j=1,\ldots,d-2$,
		\item[$(2)$]  $x_j$ are a superficial element of $R/\q_{j-1}$ with respect to $I$ and $\q$, for all $1\le j\le d-2$.
	\end{enumerate}
		Let  $B=R/\q_{d-1}$, $A=R/\q_{d-2}$ and let $N$ denote the unmixed component of $A$. Then $A/N$ is unmixed and $\H^0_{\m}(A/N) = 0$.
	Since $e(\m;R)>1$, by Proposition 2.3 in \cite{GoS03}, we get that $\m I^n = \m\q^n$ for all $n\ge 1$. Therefore $I^n\subseteq \q^n:\m$ for all $n\ge 1$. Put $G=\bigoplus\limits_{n\ge 0}\q^n/\q^{n+1}$ and $\fkM=\m/\q\oplus\bigoplus\limits_{n\ge 1}\q^n/\q^{n+1}$. 
	Then we have $((0):_G\fkM)_n=[\q^n\cap (\q^{n+1}:\m)]/\q^{n+1}$ for all $n\ge 0$ and so that
	 $((0):_G\fkM)_n=(\q^{n+1}:\m)/\q^{n+1}$ for all large $n$.
	Because $x_1$ is a superficial element of $R$ with respect to $\q$, we have $\q^{n+1}:x_1=\q^n$ for all $n\ge 0$. 
	Set $\overline R = R/(x_1)$. For an  ideal $J$ of $R$, we denote $\overline J=(J+(x_1)/(x_1)$.
	It follows from the exact sequences
	$$\xymatrix{0\ar[r]& \frac{I^{n}\cap (\q^{n+1}:x_1)}{\q^{n}}\ar[r]& \frac{I^{n}}{\q^{n}}\ar[r]& \frac{I^{n+1}}{\q^{n+1}}\ar[r]& \frac{I^{n+1}}{x_1I^n+\q^{n+1}}\ar[r]& 0}$$
	$$\text{and} \quad \xymatrix{0 \ar[r]&\frac{I^{n+1}\cap (x_1)+\q^{n+1}}{x_1I^n+\q^{n+1}}\ar[r]&\frac{I^{n+1}}{x_1I^n+\q^{n+1}}\ar[r]& \frac{\overline {I^{n+1}}}{\overline{\q^{n+1}}}\ar[r]& 0},$$
	for 
	all $n\ge 0$ that
	$\ell({I^{n+1}}/{\q^{n+1}})-\ell({I^{n}}/{\q^{n}})=\ell({I^{n+1}}/{x_1I^n+\q^{n+1}})\ge \ell({\overline{I^{n+1}}}/{\overline{\q^{n+1}}}).$
	Therefore, we have
	$ e_1(I;R) - e_1(\q;R) \geq e_1(\overline I;\overline R) - e_1(\overline\q;\overline R).$
	Notice that $\overline I=\overline \q:\overline \m$, by induction,  we have
	$$ r_d(R)\ge e_1(I;R)-e_1(\q;R)\ge e_1(IB;B)-e_1(\q B;B).$$
	On the other hand, since $\dim B=1$, $B$ is sequentially Cohen-Macaulay.
	By Proposition 2.3 in \cite{GoS03} $I$  is contained in the integral closure of $\q$. Thus by Proposition \ref{P2.7} and Theorem \ref{3.800},  for large enough $n$,  we have
	$$\begin{aligned}
	e_1(IB;B)-e_1(\q B;B)&= \ell(B/\q^{n+1}B)-\ell(B/I^{n+1}B)
	=\ell(\frac{((x_d)B:\m B)^{n+1}}{(x_d^{n+1})B}) \\
	&\ge\ell(\frac{(x_d^{n+1})B:\m B}{(x_d^{n+1})B}) 
	=\mathrm{ir}_B(\q^{n+1})=r_1(B)+r_0(B)\ge r_1(B).
	\end{aligned}$$
	Since $\q\subset \m^{\rmg(R)}$, by Lemma \ref{tcr}, we have
	$r_d(B)=r_1(R)\le r_2(A).$	
	 Moreover, we also obtain the commutative diagram
	
				 		$$\xymatrix{(0):_{\H^{2}_\fkm(A)}\m\ar[rd]^{f^*_{d-2}}&\\
			 			(0):_{\H^{1}_\fkm(B)}\m\ar[u]^{f^{\prime *}_{d-2}}\ar[r]^{\cong}&(0):_{\H^{d}_\fkm(R)}\m.}$$
	Since $f_{d-2}^*$ is surjective, $(0):_{\H^{1}_\fkm(B)}\m$ is a direct summand of $(0):_{\H^{2}_\fkm(A)}\m$. Therefore  the map $f^{\prime *}_{d-2}:(0):_{\H^{1}_\fkm(B)}\m\to (0):_{\H^{2}_\fkm(A)}\m$ is injective.

	On the other hand, it follows from the exact sequence
	$$0\to N\to A\to A/N\to 0$$
	and $x_{d-1}$ is a regular of $A/N$ that we have  the exact sequence
	$$0\to N/(x_{d-1})\to A/(x_{d-1})\to A/(x_{d-1})+N\to 0.$$
	Therefore $\H^{1}_\fkm(B) =\H^{1}_\fkm(A/(x_{d-1})+N)$ and $\H^{2}_\fkm(A)=\H^{2}_\fkm(A/N)$. Thus the canonical map $\alpha:(0):_{\H^{1}_\fkm(A/(x_{d-1})+N)}\m\to (0):_{\H^{2}_\fkm(A/N)}\m$ is injective.
	 The  exact sequence
	$$\xymatrix{0\ar[r]& A/N \ar[r]^{.x_{d-1}}& A/N\ar[r]& A/(x_{d-1})+N\ar[r]& 0}$$
	induces 
$\H^0_\fkm( A/(x_{d-1})+N)=(0):_{\H^1_\fkm( A/N)}x_{d-1}$ and	the  exact sequence 
	$$\xymatrix{0\ar[r]& \H^1_\fkm( A/N)/x_{d-1}\H^1_\fkm( A/N)\ar[r]&\H^1_\fkm( A/(x_{d-1})+N)\ar[r]& (0):_{\H^2_\fkm( A/N)}x_{d-1}\ar[r]& 0}.$$
		After applying the functor $\Hom(k,\bullet)$, we obtain the commutative diagram 
			$$\xymatrix{0\ar[r]& (0):_{\H^1_\fkm( A/N)/x_{d-1}\H^1_\fkm( A/N)}\m\ar[r]& (0):_{\H^1_\fkm( A/(x_{d-1})+N)}\m \ar[r]^{\alpha}& 0:_{\H^2_\fkm( A/N)}\m }.$$
		Since map $\alpha$ is injective, we have $\H^1_\fkm( A/N)=x_{d-1}\H^1_\fkm( A/N)$, and so  $\H^1_\fkm( A/N)=0$. Hence $S$ is Cohen-Macaulay, because of the Lemma \ref{property} 3). The proof is complete.

\end{proof}

\section{Proof of main theorems and corollaries}
In this section, we begin to combine all of our lemmas from Section 2 and 3 to provide a proof of the main theorems in this study.

\begin{thm}\label{main1}
Assume that $R$ is a homomorphic image of a Cohen-Macaulay local ring with $\dim R = d \ge 1$. 
Then the following statements are equivalent.
\begin{itemize}
\item[${\rm (1)}$] $R$ is sequentially Cohen-Macaulay.

\item[${\rm (2)}$]  There exists a distinguished parameter ideal $\fkq\subseteq \fkm^{\rmg(R)}$  such that 
$$ (-1)^{d-j}(e_{d-j+1}(\q:\m)-e_{d-j+1}(\fkq))\le r_{j}(R),$$
for all $2\le j\in\Lambda(R)$.
\end{itemize}
\end{thm}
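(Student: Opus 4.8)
The plan is to prove the two implications separately. The implication $(1)\Rightarrow(2)$ is essentially bookkeeping with the results already in hand, while $(2)\Rightarrow(1)$ will go by induction on $\ell=\sharp\Lambda(R)$, peeling off the top quotient $S=R/\fka_{\ell-1}$ at each stage and descending to $R/\fkb$.

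For $(1)\Rightarrow(2)$, assume $R$ is sequentially Cohen-Macaulay. Corollary \ref{4.300} produces an $n\gg0$ such that every distinguished parameter ideal $\q\subseteq\m^n$ satisfies $e_k(I)-e_k(\q)=(-1)^{k-1}r_{d-k+1}(R)$ for all $k$. I would pick such a $\q$ with in addition $\q\subseteq\m^{\rmg(R)}$, which is possible because the powers $(x_1^m,\dots,x_d^m)$ of a distinguished system of parameters are again distinguished and lie in $\m^m$ for $m\gg0$. Substituting $k=d-j+1$ turns the identity into $(-1)^{d-j}\bigl(e_{d-j+1}(I)-e_{d-j+1}(\q)\bigr)=r_j(R)$; in particular the inequality in (2) holds (with equality) for every $2\le j\in\Lambda(R)$.

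For $(2)\Rightarrow(1)$ I would induct on $\ell$. If $d=1$ the ring is automatically sequentially Cohen-Macaulay, so assume $d\ge2$. The instance $j=d$ of (2) reads $e_1(I)-e_1(\q)\le r_d(R)$, so Proposition \ref{P4.60} shows that $S$ is Cohen-Macaulay; this already finishes the base case $\ell=1$ (where $S=R$). For $\ell\ge2$, with $S$ Cohen-Macaulay Lemma \ref{3.444} gives $\rmg(R/\fkb)\le\rmg(R)$, so the image $\overline\q$ is a distinguished parameter ideal of $R/\fkb$ inside $\overline\m^{\rmg(R/\fkb)}$, and $R/\fkb$ carries a dimension filtration of length $\ell-1$ with the same lower pieces $C_1,\dots,C_{\ell-2}$ as $R$ and top piece fitting in $0\to C_{\ell-1}\to (R/\fkb)/\fka_{\ell-2}\to S/\fkb S\to0$. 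I would then transport (2) from $R$ to $R/\fkb$ via Corollary \ref{cor4.61}: writing $t=d-d_{\ell-1}$ and reading the corollary at index $j=d_{\ell-1}-j'+1$, it identifies the coefficient difference for $R/\fkb$ at $j$ with $(-1)^t$ times the difference for $R$ at index $d-j'+1$; since $(-1)^{d_{\ell-1}-j'}(-1)^t=(-1)^{d-j'}$ the signs cancel, so the hypothesis of (2) for $R$ at $j'$ becomes the required inequality for $R/\fkb$ at $j'$ once the socle numbers are matched. For $j'<d_{\ell-1}$ the exact sequences $0\to\fka_{\ell-1}\to R/\fkb\to S/\fkb S\to0$ and $0\to\fka_{\ell-1}\to R\to S\to0$ give $\H^{j'}_\m(R/\fkb)\cong\H^{j'}_\m(\fka_{\ell-1})\cong\H^{j'}_\m(R)$, whence $r_{j'}(R/\fkb)=r_{j'}(R)$, and those indices are immediate. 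Granting the top index (below), the inductive hypothesis makes $R/\fkb$ sequentially Cohen-Macaulay, so $C_1,\dots,C_{\ell-2}$ and $(R/\fkb)/\fka_{\ell-2}$ are all Cohen-Macaulay; since $S/\fkb S$ is Cohen-Macaulay of dimension $d_{\ell-1}$, the displayed short exact sequence then forces $C_{\ell-1}$ to be Cohen-Macaulay by a local cohomology count. Together with $C_\ell=S$ this makes every $C_i$ Cohen-Macaulay, i.e. $R$ sequentially Cohen-Macaulay.

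The hard part is precisely the top index $j'=d_{\ell-1}$, where Corollary \ref{cor4.61} carries the extra summand $r_d(R)$: matching (2) for $R/\fkb$ at its top dimension amounts to the socle identity $r_{d_{\ell-1}}(R/\fkb)=r_{d_{\ell-1}}(R)+r_d(R)$, equivalently to surjectivity on socles of the map $\Hom_R(k,\H^{d_{\ell-1}}_\m(R/\fkb))\to\Hom_R(k,\H^{d_{\ell-1}}_\m(S/\fkb S))$ coming from the first exact sequence above. I expect to obtain this exactly as in the proof of Lemma \ref{3.444}, by chasing the canonical maps $\phi^{\bullet}_{\q,-}$ into local cohomology, which are surjective on socles because $\q\subseteq\m^{\rmg(R)}$. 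A minor compatibility to verify alongside is that $\overline{\q:\m}$ and $\overline\q:\overline\m$ define the same Hilbert polynomial on $R/\fkb$, so that the left-hand side of Corollary \ref{cor4.61} really is the socle-ideal data of $R/\fkb$; this follows from $IS=\q S:\m$ and the claim established in the proof of Lemma \ref{4.5}.
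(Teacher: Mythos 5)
Your proposal is correct and follows essentially the same route as the paper's proof: $(1)\Rightarrow(2)$ via Corollary \ref{4.300}, and $(2)\Rightarrow(1)$ by the same descent from $R$ to $R/\fkb$ --- the $j=d$ instance plus Proposition \ref{P4.60} to make $S$ Cohen--Macaulay, Corollary \ref{cor4.61} together with the socle identity $r_{d_{\ell-1}}(R/\fkb)=r_{d_{\ell-1}}(R)+r_d(R)$ (which the paper obtains from Lemma \ref{2.10} by exactly the diagram chase you cite from the proof of Lemma \ref{3.444}), and Lemma \ref{3.444} to keep $\overline{\q}\subseteq\overline{\m}^{\rmg(R/\fkb)}$ for the inductive call. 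Inducting on $\ell$ rather than on $d$ is an immaterial repackaging, since each step decreases both, and your explicit local-cohomology count forcing $C_{\ell-1}$ to be Cohen--Macaulay simply fills in a step the paper leaves implicit.
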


\begin{proof}
${\rm (1)}\Rightarrow {\rm (2)}$ follows from  Corollary \ref{4.300}.\\
${\rm (2)} \Rightarrow {\rm (1)}$	We use induction on the dimension $d$ of $R$. In the case  $\dim R = 2$, it is the result of Proposition \ref{P4.60}.  Suppose that $\dim R > 2$ and  our assertion holds true for $\dim R-1$.   Recall that $\fka_{\ell-1}$ is the unmixed component of $R$ and $\fkb=(x_{d_{\ell-1}+1},\ldots,x_d)$. Therefore, by definition we have $\fkb\subseteq \fkm^{\rmg(R)}$ and $\fkb \cap \fka_{\ell-1}=0$. Thus, $(\fka_i+\fkb)/\fkb=\fka_i$ for all $i=0,\ldots,\ell-1$, and so $\Lambda(R)-\{d\}\subseteq \Lambda(R/\fkb)$.
	On the other hand, since $d\in\Lambda(R)$, we obtain $e_1(I;R)-e_1(\q, R)\le r_d(R)$. By Proposition \ref{P4.60}, $S$ is Cohen-Macaulay. It follows from  the  exact sequence
	$$\xymatrix{0\ar[r]& \fka_{\ell-1}\ar[r]& R \ar[r]& S\ar[r]& 0}$$
	that the sequence
	\begin{equation}\label{seq1}
	\xymatrix{0\ar[r]& \fka_{\ell-1}\ar[r]& R/\fkb \ar[r]& S/\fkb S\ar[r]& 0}
	\end{equation}
	is exact. Hence $\Lambda(R/\fkb)=\Lambda(R)-\{d\}$.
	
	
	 By Corollary \ref{cor4.61}, we have $$ e_{j}(I;R/\fkb)-e_{j}(\fkq;R/\fkb)=
	\begin{cases}(-1)^{t}((e_{t+j}(I;R)-e_{t+j}(\q;R)))+r_d(R)&\text{if $j=1$,}
	\\
	(-1)^{t} (e_{t+j}(I;R)-e_{t+j}(\q;R))&\text{if } j\ge 2,\\
	\end{cases}$$  
	where $t=d-d_{\ell-1}$. However, it follows from $S$ is Cohen-Macaulay and the  exact sequence \ref{seq1}
	that the  sequence
	$$0\to \H^{d_{\ell-1}}_\m(\fka_{\ell-1})\to \H^{d_{\ell-1}}_\m(R/\fkb) \to \H^{d_{\ell-1}}_\m(S/\fkb S)\to 0$$
	is exact. Moreover, we have $\H^{d_{\ell-1}}_\m(R)\cong \H^{d_{\ell-1}}_\m(\fka_{\ell-1})$  
	and $\H^i_\m(R)\cong \H^i_\m(\fka_{\ell-1})\cong \H^i_\m(R/\fkb)$, for all $i< d_{\ell-1}$. Thus, we have $r_i(R)=r_i(R/\fkb)$ for all $i<d_{\ell-1}.$ 
Since $S$ is Cohen-Macaulay, we have the exact diagram
$$\xymatrix{
		0\ar[r]& \fka_{\ell-1}/\q \fka_{\ell-1}\ar[r]^{ \iota}& R/\q  \ar[d]^{\phi_{R/\fkb}}\ar[r]^{ \epsilon} & S/\q S \ar[r] \ar@{^{(}->}[d]^{\phi_{S/\fkb S}}&0\\
		0\ar[r]& \H^{d_{\ell -1}}_\m(\fka_{\ell-1}) \ar[r] &  \H^{d_{\ell -1}}_\m(R/\fkb) \ar[r] & \H^{d_{\ell -1}}_\m(S/\fkb) \ar[r]&0.}$$
By applying the functor $\Hom(k,\bullet)$, we obtain the commutative diagram 
$$\xymatrix{
		0\ar[r]& (0) :_{\fka_{\ell-1}/\q \fka_{\ell-1}} \m \ar[r]^{\overline \iota}& (0) :_{ R/\q} \m  \ar[d]^{\overline\phi_{R/\fkb}}\ar[r]^{\overline \epsilon} & (0) :_{S/\q S} \m  \ar@{^{(}->}[d]^{\overline\phi_{S/\fkb S}}\\
	 0\ar[r]& (0) :_{\H^{d_{\ell -1}}_\m(\fka_{\ell-1})} \m  \ar[r] & (0) :_{ \H^{d_{\ell -1}}_\m(R/\fkb) } \m \ar[r] & (0) :_{\H^{d_{\ell -1}}_\m(S/\fkb)} \m, &}$$
where the rows are exact.  Since $S$ is Cohen-Macaulay, by Lemma \ref{2.10}, we have 
$\mathrm{ir}_R(\q) = \mathrm{ir}_{\fka_{\ell -1}}(\q) + \mathrm{ir}_{S}(\q).$ Therefore, map $\overline \epsilon$ is surjective.  Since $\overline\phi_{S/\fkb S}$ is surjective, 
we obtain
$$	\begin{aligned}
		r_{d_{\ell-1}}(R/\fkb)&=r_{d_{\ell-1}}(\fka_{\ell-1})+r_{d_{\ell-1}}(S/\fkb S)
		=r_{d_{\ell-1}}(R)+r_d(S)\\
		&\ge(-1)^{s}(e_{t+1}(I;R)-e_{t+1}(\q;R))+r_d(S)
		\ge e_{1}(I;R/\fkb)-e_{1}(\fkq;R/\fkb).
		\end{aligned}$$
	Moreover, we have
	\begin{eqnarray*}
		r_{j}(R/\fkb)=r_{j}(R)&\ge&(-1)^{d-j}(e_{d-j+1}(I;R)-e_{d-j+1}(\q;R))\\
		&= &(-1)^{d_{\ell-1}-j}(e_{d_{\ell-1}-j+1}(I;R/\fkb)-e_{d_{\ell-1}-j}(\fkq;R/\fkb)),
	\end{eqnarray*}
	for all $2\le j\in\Lambda(R/\fkb)-\{ d_{\ell-1}\}$.
Hence, $r_{d_{\ell-1}-j+1}(R/\fkb)\ge (-1)^{j+1}(e_{j}(I;R/\fkb)-e_{j}(\fkq;R/\fkb))$, for all  $2\le j\in\Lambda(R/\fkb)$.

Now since $S$ is Cohen-Macaulay, by Lemma \ref{3.444}, we have $\rmg(R/\fkb)\le \rmg(R)$. Then $\fkq/\fkb\subseteq \fkm^{\rmg(R/\fkb)}(R/\fkb)$. 	By the induction hypothesis, $R/\fkb$ is sequentially Cohen-Macaulay. Hence  $R$ is sequentially Cohen-Macaulay, as required.

\end{proof}


The first consequence of Theorem \ref{main1} is to give a result slightly stronger than Theorem 1.1 in \cite{Tru17}. 
\begin{thm}\label{T6.6}
Assume that $R$ is unmixed and $\dim R\ge2$. Then the following statements are equivalent.
\begin{enumerate}[{(1)}] \rm
    \item {\it $R$ is Cohen-Macaulay.}

    \item {\it For some parameter ideal $\frak q\subseteq \m^{\rmg(R)}$, we
    have}
$ \e_1(\q:\m)-\e_1(\q)\le r_d(R).$
\end{enumerate}

\end{thm}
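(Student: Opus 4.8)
The plan is to deduce Theorem~\ref{T6.6} as the unmixed specialization of Theorem~\ref{main1}; essentially no new computation is needed, and the work lies in checking that the hypotheses and the index set of Theorem~\ref{main1} collapse correctly.

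First I would record the effect of unmixedness on the dimension filtration. Since every associated prime of $R$ has dimension $d$, we have $\Lambda(R)=\{d\}$ and hence $\ell=1$, so the dimension filtration is simply $(0)=\fka_0\subsetneq\fka_1=R$, with unmixed component $\fka_{\ell-1}=(0)$ and $S=R/\fka_{\ell-1}=R$. In this situation the defining condition for a distinguished system of parameters is vacuous: the only relevant submodule is $D_1=R$, and since $d_1=d$ the index range $\{\,j\mid d_1<j\le d\,\}$ is empty. Hence \emph{every} parameter ideal of $R$ is distinguished, so the notions of parameter ideal and distinguished parameter ideal coincide here.

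Next I would invoke the standard fact, recalled in the introduction, that an unmixed ring is Cohen-Macaulay if and only if it is sequentially Cohen-Macaulay. This lets me replace condition~(1) of Theorem~\ref{T6.6} by the sequential Cohen-Macaulayness of $R$ and then apply the equivalence of Theorem~\ref{main1}. It then remains to identify condition~(2) of Theorem~\ref{main1} in the present case. Since $\Lambda(R)=\{d\}$ and $d\ge2$ by hypothesis, the only admissible index in the quantifier ``$2\le j\in\Lambda(R)$'' is $j=d$; for this value the sign is $(-1)^{d-d}=1$ and the relevant coefficient is $e_{d-d+1}=e_1$, so the whole family of inequalities in Theorem~\ref{main1}(2) reduces to the single inequality $\e_1(\q:\m)-\e_1(\q)\le r_d(R)$. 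Combining these identifications with the equivalence of Theorem~\ref{main1} yields the asserted equivalence.

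I do not expect a genuine obstacle: all the substance is carried by Theorem~\ref{main1}, and the argument reduces to verifying that the index set shrinks to a singleton, that the sign equals $+1$, and that distinguishedness is automatic in the unmixed case. For completeness, the forward implication $(1)\Rightarrow(2)$ can alternatively be obtained directly from Corollary~\ref{4.300}: when $R$ is sequentially Cohen-Macaulay one has the exact equality $\e_1(\q:\m)-\e_1(\q)=r_d(R)$ for every distinguished $\q$ lying in a sufficiently high power of $\m$, and choosing that power to be at least $\rmg(R)$ produces a parameter ideal $\q\subseteq\m^{\rmg(R)}$ satisfying the required inequality (indeed with equality).
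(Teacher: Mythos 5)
Your proposal is correct and follows exactly the paper's own route: the paper proves Theorem~\ref{T6.6} in one line by specializing Theorem~\ref{main1} together with the observation that unmixedness makes every parameter ideal distinguished. Your expanded verification (that $\Lambda(R)=\{d\}$ collapses the family of inequalities in Theorem~\ref{main1}(2) to the single inequality $\e_1(\q:\m)-\e_1(\q)\le r_d(R)$ with sign $+1$, and that CM equals unmixed plus sequentially CM) merely spells out what the paper calls ``immediate,'' and your alternative derivation of $(1)\Rightarrow(2)$ from Corollary~\ref{4.300} is likewise sound.
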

\begin{proof}
It follows immediately from Theorem \ref{main1} and the fact that if $R$ is unmixed then every parameter ideal of $R$ is distinguished.
\end{proof}

The following consequence of Theorem \ref{main1} provides a characterization of Gorenstein rings.

\begin{cor}
Assume that $R$ is unmixed and $\dim R\ge2$. Then the following statements are equivalent.
\begin{enumerate}[$\rm (1)$] \rm
    \item {\it $R$ is Gorenstein.}

    \item {\it For some parameter ideal $\frak q\subseteq \m^{\rmg(R)}$, we
    have}
$ \e_1(\q:\m)-\e_1(\q)=1.$
\end{enumerate}

\end{cor}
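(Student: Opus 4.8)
The plan is to read this corollary as the precise equality case of Theorem \ref{T6.6}, combined with the exact Hilbert-coefficient formula of Proposition \ref{coe} and Corollary \ref{4.300}. Throughout I would invoke the standard fact that for a Cohen-Macaulay local ring the invariant $r_d(R)=\ell\bigl(0:_{\H^d_\m(R)}\m\bigr)$ is exactly the Cohen-Macaulay type of $R$, so that $R$ is Gorenstein if and only if $R$ is Cohen-Macaulay with $r_d(R)=1$. I would also record at the outset that, since $\dim R=d$, the top local cohomology $\H^d_\m(R)$ is a nonzero Artinian module and hence has nonzero socle, so that $r_d(R)\ge 1$ in all cases.

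For the implication $(1)\Rightarrow(2)$: if $R$ is Gorenstein it is in particular Cohen-Macaulay and unmixed, hence sequentially Cohen-Macaulay, with $r_j(R)=0$ for $j\ne d$ and $r_d(R)=1$. By Corollary \ref{4.300} there is an integer $n\gg 0$ such that every distinguished parameter ideal $\q\subseteq\m^n$ satisfies $\e_1(\q:\m)-\e_1(\q)=(-1)^{0}r_d(R)=r_d(R)$. Enlarging $n$ so that $n\ge\max\{2,\rmg(R)\}$ and using that every parameter ideal of a Cohen-Macaulay ring is distinguished, I would pick any such $\q\subseteq\m^n$, obtaining a parameter ideal $\q\subseteq\m^{\rmg(R)}$ with $\e_1(\q:\m)-\e_1(\q)=r_d(R)=1$.

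For $(2)\Rightarrow(1)$: given $\q\subseteq\m^{\rmg(R)}$ with $\e_1(\q:\m)-\e_1(\q)=1$, the bound $1\le r_d(R)$ noted above yields $\e_1(\q:\m)-\e_1(\q)\le r_d(R)$, so Theorem \ref{T6.6} shows $R$ is Cohen-Macaulay. Then $R$ is sequentially Cohen-Macaulay and this same $\q$ is distinguished with $\mathrm{ir}_R(\q)=r_d(R)=\sum_{j}r_j(R)$, since all lower local cohomology of a Cohen-Macaulay ring vanishes and the index of reducibility of a parameter ideal equals the type. Hence Proposition \ref{coe} applies and gives $\e_1(\q:\m)-\e_1(\q)=r_d(R)$, forcing $r_d(R)=1$, so that $R$ is Gorenstein.

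The step I expect to be the main obstacle is the bookkeeping in the side hypothesis of Proposition \ref{coe}, which requires $\e_0(\m;R)>1$ or ($\q\subseteq\m^2$ and $d\ge 2$). Since $d\ge 2$, this is automatic once $\q\subseteq\m^2$, in particular whenever $\rmg(R)\ge 2$. The only genuinely separate situation is $\e_0(\m;R)=1$, arising in $(2)\Rightarrow(1)$ after $R$ has been shown to be Cohen-Macaulay: a multiplicity-one Cohen-Macaulay local ring is regular, hence Gorenstein with $r_d(R)=1$, so the conclusion holds directly without the formula. Thus the argument splits cleanly into the generic case governed by the exact formula and this degenerate multiplicity-one case handled on its own.
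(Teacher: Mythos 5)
Your proposal is correct and takes essentially the same route as the paper: your $(2)\Rightarrow(1)$ is exactly the paper's argument --- the trivial bound $r_d(R)\ge 1$ turns the equality $\e_1(\q:\m)-\e_1(\q)=1$ into the hypothesis of Theorem \ref{T6.6} (Theorem \ref{main1} for unmixed rings), and Cohen--Macaulayness then upgrades the inequality to the equality $\e_1(\q:\m)-\e_1(\q)=r_d(R)$, forcing type one. The only divergence is cosmetic: for $(1)\Rightarrow(2)$ the paper cites Theorem 3.2 of \cite{Tru17}, whereas you derive the same equality internally from Corollary \ref{4.300} and Proposition \ref{coe}, and you spell out details the paper leaves implicit (Northcott's identification $\mathrm{ir}_R(\q)=r_d(R)=\sum_j r_j(R)$ for Cohen--Macaulay rings and the degenerate case $\e_0(\m;R)=1$, where unmixedness gives regularity), all of which are sound.
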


\begin{proof}

${\rm (1)}\Rightarrow {\rm (2)}$ follows from  Theorem 3.2 in \cite{Tru17}.\\
${\rm (2)} \Rightarrow {\rm (1).}$	
Let $\q$  be a parameter ideal such that $\e_1(I) -\e_1(q) =1 $  and $\frak q\subseteq \m^{\rmg(R)}$, then we have
$\e_1(I) - \e_1(\q) \le r_d(R).$
 By Theorem \ref{main1}, $R$ is Cohen-Macaulay. Therefore, we have
$r_d(R) = \e_1(I) - \e_1(\q) = 1.$
 Hence, $R$ is Gorenstein, as required.

\end{proof}

\begin{cor}\label{P4.3}
	Assume that  $\dim R \geq 2$. Then for all  distinguished parameter ideals $\q \subseteq \fkm^{\rmg(R)}$, we have
	$  r_d(R)\le e_1(\q:\fkm)-e_1(\fkq).$
\end{cor}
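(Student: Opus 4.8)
The plan is to recognize that this is exactly the \emph{reverse} of the inequality that appears as the hypothesis of Proposition \ref{P4.60}, and that it holds unconditionally: the chain of estimates needed is precisely the one built inside the proof of Proposition \ref{P4.60}, now read with its standing hypothesis removed. First I would dispose of the case $e_0(\m;R)=1$. Here $e_0(\m;S)=1$ as well (because $\dim\fka_{\ell-1}<d$), so the unmixed ring $S=R/\fka_{\ell-1}$ is regular, hence Cohen-Macaulay; since $S$ is Cohen-Macaulay and $\q\subseteq\m^{\rmg(R)}$ we have $IS=\q S:\m S$ (cf.\ Lemma \ref{4.5}, Corollary \ref{cor4.61}), so that Proposition \ref{coe} applied to $S$ yields $e_1(IS;S)-e_1(\q S;S)=r_d(S)=r_d(R)$, its hypotheses being available because in this degenerate range $\q\subseteq\m^{\rmg(R)}$ already lies in $\m^2$. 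Comparing Hilbert functions along $0\to\fka_{\ell-1}\to R\to S\to 0$, the extra $\binom{n+d-1}{d-1}$-contribution coming from $(I^{n+1}\cap\fka_{\ell-1})/(\q^{n+1}\cap\fka_{\ell-1})$ is a nonnegative length, and hence $e_1(I;R)-e_1(\q;R)\ge r_d(R)$.

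For the main case $e_0(\m;R)>1$ I would reproduce the dimension reduction of Proposition \ref{P4.60}. Writing $\q=(x_1,\dots,x_d)$ for a distinguished system coming from a Goto sequence, each reduction modulo a superficial element $x_1$ preserves the socle-ideal relation, i.e. $\overline{\q:\m}=\overline\q:\overline\m$ in $R/(x_1)$, and the two short exact sequences used there give, after comparing leading coefficients of $\ell(I^{n+1}/\q^{n+1})-\ell(I^{n}/\q^{n})$ with $\ell(\overline{I}^{\,n+1}/\overline{\q}^{\,n+1})$, the estimate $e_1(I;R)-e_1(\q;R)\ge e_1(\overline I;\overline R)-e_1(\overline\q;\overline R)$. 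Iterating down to the one-dimensional ring $B=R/\q_{d-1}$, and using that $B$ is sequentially Cohen-Macaulay with $IB=\q B:\m B$ and $IB$ in the integral closure of $\q B$, Proposition \ref{P2.7} and Theorem \ref{3.800} give $e_1(IB;B)-e_1(\q B;B)\ge \mathrm{ir}_B(\q^{n+1})\ge r_1(B)$. Finally Lemma \ref{tcr} with $j=d-1$ gives $r_1(B)=r_1(R/\q_{d-1})\ge r_d(R)$, so that
$$e_1(I;R)-e_1(\q;R)\ \ge\ e_1(IB;B)-e_1(\q B;B)\ \ge\ r_1(B)\ \ge\ r_d(R).$$

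The step I expect to be most delicate is confirming that these estimates are genuinely unconditional. In the proof of Proposition \ref{P4.60} the inequality $e_1(I;R)-e_1(\q;R)\le r_d(R)$ is assumed throughout, and I must verify that it is used only to force the terminal equalities, never to obtain the intermediate bounds $e_1(I;R)-e_1(\q;R)\ge e_1(IB;B)-e_1(\q B;B)$ and $e_1(IB;B)-e_1(\q B;B)\ge r_1(B)$. The crux is the preservation of the socle ideal under reduction by a superficial element, which is what keeps the inductive quantity in the form $e_1(\q:\m)-e_1(\q)$; this relies on $\m I^n=\m\q^n$, hence on $e_0(\m;R)>1$, and is exactly why the case $e_0(\m;R)=1$ must be isolated as above (there one must also check that $\q\subseteq\m^{\rmg(R)}$ indeed places $\q$ in $\m^2$, so that Proposition \ref{coe} applies to $S$). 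Once unconditionality is secured, the displayed chain closes the argument.
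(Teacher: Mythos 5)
Your main case ($e_0(\m;R)>1$) is, in substance, exactly the paper's argument: the paper's official proof of Corollary \ref{P4.3} is the one-line pointer ``follows from Theorem \ref{main1}'', and what that pointer actually invokes is the chain built inside the proof of Proposition \ref{P4.60} --- the superficial-element reduction giving $e_1(I;R)-e_1(\q;R)\ge e_1(IB;B)-e_1(\q B;B)$, the one-dimensional computation via Proposition \ref{P2.7} and Theorem \ref{3.800} giving $e_1(IB;B)-e_1(\q B;B)\ge r_1(B)$, and Lemma \ref{tcr} with $j=d-1$ giving $r_1(B)\ge r_d(R)$. You correctly identified that the standing hypothesis $e_1(I)-e_1(\q)\le r_d(R)$ of Proposition \ref{P4.60} is used there only to close this chain into equalities, never to produce the intermediate bounds, so the chain is available unconditionally in this case. (You also inherit, without comment, the paper's own tacit step of taking the generators of $\q$ to form a Goto sequence; since the paper does the same in Proposition \ref{P4.60}, I do not count that against you.)

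The genuine gap is in your treatment of the case $e_0(\m;R)=1$, specifically the claim that ``in this degenerate range $\q\subseteq\m^{\rmg(R)}$ already lies in $\m^2$.'' This is false. For a Cohen--Macaulay ring the Koszul-to-local-cohomology maps are surjective on socles for \emph{every} parameter ideal: the lower Koszul cohomologies vanish, and in degree $d$ the map $R/\q\to\H^d_\m(R)$ is injective with source and target having socles of the same length $r_d(R)$, hence is surjective on socles. So $\rmg(R)\le 1$ for such rings and $\m^{\rmg(R)}=\m$, which does not force $\q\subseteq\m^2$. Consequently Proposition \ref{coe} cannot be applied to $S$ as you propose, and in fact the asserted inequality itself fails in this regime: for $R=k[[x,y]]$ and $\q=(x,y^2)\subseteq\m=\m^{\rmg(R)}$ one has $\q:\m=\m$, so $e_1(\q:\m)-e_1(\q)=0<1=r_2(R)$. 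This is precisely why Proposition \ref{coe} carries the hypothesis ``$e_0(\m;R)>1$ or $\q\subseteq\m^2$'' and why Theorem \ref{T1.2} assumes $R$ non-regular. To be fair, the paper's one-line proof of Corollary \ref{P4.3} ignores this degenerate case as well (the statement really needs a caveat excluding $e_0(\m;R)=1$ with shallow $\q$), so your instinct to isolate $e_0(\m;R)=1$ was sound; but the patch you offer does not close it, and no patch along those lines can, since the inequality is genuinely false there.
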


\begin{proof}
The result follows from Theorem \ref{main1}.
\end{proof}

Recall that the {\it fiber cone} of $I$ is the graded ring
$F(I)=\bigoplus_{n\ge 0} I^n/\m I^n.$
It is well-known in this setting that the Hilbert function $H_F(n)$ giving the dimension of $I^n/\m I^n$ as a vector space over $k$ is defined for  sufficiently large $n$ by a polynomial $h_F(X) \in \Bbb Q[X]$, the Hilbert polynomial of $F(I)$ \cite[Corollary, page 95]{Mat86}. A simple application of Nakayama’s lemma, (\cite[Theorem 2.2]{Mat86}), shows that the cardinality of a minimal set of generators of $I^n$ and denoted by $\mu(I^n)$, is equal to $\ell(I^n/\m I^n)$, the value of the Hilbert function $H_F(n)$ of $F(I)$. 
Then the integers $f_i(I;R)$ exist such that
$$h_F(n+1)=\sum\limits_{i=0}^{d-1}(-1)^if_i(I;R)\binom{n+d-1-i}{d-1-i}.$$
From the notations given above, the second main application of Theorem \ref{main1} is stated as follows.

\begin{thm}\label{T1.2}
 Assume that $R$ is a non-regular unmixed  local ring and $\dim R\ge 2$. Then the following statements are equivalent.
	\begin{itemize}
		\item[${\rm (1)}$] $R$ is Cohen-Macaulay.
		\item[${\rm (2)}$] For some parameter ideal $\fkq \subseteq \fkm^{\rmg(R)}$, we
		have
		$f_0(\frak q:\fkm;R)  = r_d(R)+1.$
				
	\end{itemize}
	
\end{thm}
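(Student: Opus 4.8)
The plan is to reduce the whole equivalence to a single numerical identity: for the given parameter ideal $\q\subseteq\m^{\rmg(R)}$, writing $I=\q:\m$, one should have
$$f_0(I;R)=\bigl(e_1(I;R)-e_1(\q;R)\bigr)+1.$$
Granting this, the theorem follows at once. Since $R$ is unmixed, every parameter ideal is distinguished, so Corollary \ref{P4.3} gives the unconditional lower bound $r_d(R)\le e_1(I;R)-e_1(\q;R)$, while Theorem \ref{T6.6} characterizes Cohen--Macaulayness by the reverse inequality $e_1(I;R)-e_1(\q;R)\le r_d(R)$ holding for some such $\q$. Hence the condition $f_0(I;R)=r_d(R)+1$ is equivalent to $e_1(I;R)-e_1(\q;R)=r_d(R)$, and in the presence of the always-valid lower bound this is in turn equivalent to $e_1(I;R)-e_1(\q;R)\le r_d(R)$, hence to $R$ being Cohen--Macaulay. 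For $(1)\Rightarrow(2)$ I would take the parameter ideal supplied by Theorem \ref{T6.6}; for $(2)\Rightarrow(1)$ the given ideal already works.

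To establish the identity I would first dispose of multiplicity one: if $e_0(\m;R)=1$ then, $R$ being unmixed, $R$ is Cohen--Macaulay by \cite[Theorem 40.6]{Nag62} and therefore regular, contrary to hypothesis; so $e_0(\m;R)>1$. Consequently Proposition 2.3 of \cite{GoS03} yields $\m I^{n+1}=\m\q^{n+1}$ for all $n\ge0$, and since $\m\q^{n+1}\subseteq\q^{n+1}\subseteq I^{n+1}$ we obtain
$$\mu(I^{n+1})=\ell_R\!\left(I^{n+1}/\m\q^{n+1}\right)=\ell_R\!\left(I^{n+1}/\q^{n+1}\right)+\ell_R\!\left(\q^{n+1}/\m\q^{n+1}\right)=\ell_R\!\left(I^{n+1}/\q^{n+1}\right)+\mu(\q^{n+1}).$$
The second summand is computed through the fiber cone $F(\q)$: an $\m$-primary parameter ideal is minimally generated by exactly $d$ elements and has analytic spread $d$, so $F(\q)$ is a $d$-dimensional standard graded quotient of a polynomial ring in $d$ variables, forcing $F(\q)\cong k[X_1,\dots,X_d]$ and therefore $\mu(\q^{n+1})=\binom{n+d}{d-1}$.

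For the first summand I would write $\ell_R(I^{n+1}/\q^{n+1})=\ell_R(R/\q^{n+1})-\ell_R(R/I^{n+1})$ and expand both Hilbert--Samuel polynomials. Because $\m I=\m\q=\q\m$, the determinant trick shows every element of $I$ is integral over $\q$, whence $e_0(I;R)=e_0(\q;R)$; thus the degree-$d$ terms cancel and the coefficient of $\binom{n+d-1}{d-1}$ in $\ell_R(I^{n+1}/\q^{n+1})$ equals $e_1(I;R)-e_1(\q;R)$. Comparing the coefficients of $\binom{n+d-1}{d-1}$ on both sides of $\mu(I^{n+1})=h_F(n+1)$, noting that $\binom{n+d}{d-1}$ contributes exactly $1$, gives the desired identity. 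The main obstacle, and the step deserving the most care, is justifying the exact decomposition of $\mu(I^{n+1})$ via $\m I^{n+1}=\m\q^{n+1}$ together with the identification of $F(\q)$ as a polynomial ring; once these are secured the coefficient comparison and the appeals to Theorem \ref{T6.6} and Corollary \ref{P4.3} are routine.
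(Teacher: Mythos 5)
Your proof is correct, and its $(2)\Rightarrow(1)$ half is essentially the paper's: the paper likewise rules out $e_0(\fkm;R)=1$ via \cite[Theorem 40.6]{Nag62}, gets $\fkm I^n=\fkm\fkq^n$ from \cite[Proposition 2.3]{GoS03}, and deduces $e_1(I;R)-e_1(\fkq;R)=f_0(I;R)-f_0(\fkq;R)\le f_0(I;R)-1=r_d(R)$ before invoking Theorem \ref{main1} (your appeal to Theorem \ref{T6.6} is that theorem specialized to unmixed rings). Where you genuinely diverge is $(1)\Rightarrow(2)$: the paper computes $f_0(\fkq:\fkm;R)$ directly through the socle machinery, using $I^{n+1}=\fkq^nI=\fkq^{n+1}:\fkm$ (via \cite{CGT13a} and Lemma \ref{F2.5}) and $\mathrm{ir}_R(\fkq^{n+1})=r_d(R)\binom{n+d-1}{d-1}$ (Proposition \ref{P2.7}), so it obtains $f_0=r_d(R)+1$ for every parameter ideal $\fkq\subseteq\fkm^{\rmg(R)}$; you instead prove once the identity $f_0(I;R)=e_1(I;R)-e_1(\fkq;R)+1$ and squeeze $e_1(I;R)-e_1(\fkq;R)=r_d(R)$ between Corollary \ref{P4.3} and Theorem \ref{T6.6}, never touching Lemma \ref{F2.5} or Proposition \ref{P2.7}. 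Your route is lighter and unifies both implications under one numerical identity, certifying (2) only for the ideal furnished by Theorem \ref{T6.6} --- enough, since (2) is existential --- whereas the paper's route yields the stronger structural fact $I^{n+1}=\fkq^{n+1}:\fkm$, which it reuses in the spirit of Theorem \ref{T1.3}. Incidentally, your explicit identification $F(\fkq)\cong k[X_1,\ldots,X_d]$, giving $\mu(\fkq^{n+1})=\binom{n+d}{d-1}$ and $f_0(\fkq;R)=1$, cleans up what the paper uses only implicitly (as $f_0(\fkq;R)\ge 1$, and with an apparent typo $\binom{n+d-1}{d-1}$ in its display, harmless because both binomials have leading coefficient $1/(d-1)!$); and your determinant-trick derivation of $e_0(I;R)=e_0(\fkq;R)$ is sound, or one can quote \cite[Proposition 2.3]{GoS03} for $I\subseteq\overline{\fkq}$ as the paper does in proving Proposition \ref{P4.60}.
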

\begin{proof}
	(1) $\Rightarrow$ (2). Let $I=\q:\m$. Since $R$ is a non-regular unmixed  local ring, $e_0(\fkm;R)>1$ (\cite[Theorem 40.6]{Nag62}). Thus, by Proposition 2.3 in \cite{GoS03}, we obtain $\fkm I^n = \fkm\fkq^n$ for all $n$.	Then, since $R$ is Cohen-Macaulay, by Lemma \ref{F2.5}, we have
	$$\begin{aligned}
	\ell(I^{n+1}/\m I^{n+1})& = \ell(I^{n+1}/\q^{n+1})+\ell(\q^{n+1}/\m\q^{n+1})
	&=\ell([\q^{n+1}:\m]/\q^{n+1})+\binom{n+d-1}{d-1}.
	\end{aligned}$$
	Therefore, by Proposition \ref{P2.7}, we have $f_0(I;R)=r_d(R)+1$.
	
	
	(2) $\Rightarrow$ (1). 
	Put $I=\q:\m$. Since $e_0(\fkm;R)>1$, by Proposition 2.3 in \cite{GoS03}, we obtain $\fkm I^n = \fkm\fkq^n$ for all $n$. Therefore, we obtain
	$$\ell(R/\fkq^{n+1})-\ell(R/I^{n+1})=\ell(I^{n+1}/\fkq^{n+1})=\ell(I^{n+1}/\m I^{n+1})-\ell(\q^{n+1}/\m\q^{n+1}).$$ 
	However, this means that 
	$$\begin{aligned}
	e_1(I;R)-e_1(\fkq;R)= f_0(I;R) -f_0(\fkq;R)
	\le  f_0(I;R) -1=r_d(R).
	\end{aligned}$$
Since $R$ is unmixed, by Theorem \ref{main1}, $R$ is Cohen-Macaulay, as required.	
\end{proof}

Notice that N. T. Cuong et al. (\cite{CQT15}) showed that there exists  integers $\{g_i(\q;R)\}_{i=0}^{d-1}$ such that   $$\mathrm{ir}_R(\q^n)=\ell_R([\q^{n+1} :_R \fkm]/\q^{n+1} )=\sum\limits_{i=0}^{d-1}(-1)^ig_i(\q;R)\binom{n+d-1-i}{d-1-i},$$ for sufficiently large $n$. 
The leading coefficient $g_0(\q;R)$ is  called {\it the irreducible multiplicity} of $\q$.
With the above notations, the third main application of Theorem \ref{main1} is stated as follows.

\begin{thm}\label{T1.3} Assume that $R$ is unmixed and $\dim R\ge 2$. Then the following statements are equivalent.
	\begin{itemize}
		\item[${\rm (1)}$] $R$ is Cohen-Macaulay.
		\item[${\rm (2)}$] For some parameter ideal $\fkq \subseteq \fkm^{\rmg(R)}$, we
		have
		$g_0(\frak q;R)  = r_d(R).$
				
	\end{itemize}
	
\end{thm}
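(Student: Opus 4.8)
The plan is to reduce Theorem~\ref{T1.3} to Theorem~\ref{T6.6} by converting the condition on the irreducible multiplicity $g_0(\q;R)$ into the Chern-coefficient inequality $e_1(\q:\m)-e_1(\q)\le r_d(R)$. Throughout set $I=\q:\m$.

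For the implication ${\rm (1)}\Rightarrow{\rm (2)}$ I would show that any sufficiently deep parameter ideal is a witness. If $R$ is Cohen-Macaulay then it is unmixed and sequentially Cohen-Macaulay, every parameter ideal is distinguished, and for each such $\q$ the index of reducibility equals the Cohen-Macaulay type, i.e. $\mathrm{ir}_R(\q)=r_d(R)=\sum_{j\in\Z}r_j(R)$ because $r_j(R)=0$ for $j\ne d$ (a classical fact, also a special case of Theorem~\ref{3.800}). Thus Proposition~\ref{P2.7} applies with $M=R$ and $s=d$, yielding
$$\ell_R([\q^{n+1}:\m]/\q^{n+1})=\sum_{i=1}^{d}r_i(R)\binom{n+i-1}{i-1}+r_0(R)=r_d(R)\binom{n+d-1}{d-1}$$
for all $n\ge1$. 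Comparing this with the defining expansion of the irreducible multiplicity forces $g_0(\q;R)=r_d(R)$, so choosing any $\q\subseteq\m^{\rmg(R)}$ establishes ${\rm (2)}$.

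For ${\rm (2)}\Rightarrow{\rm (1)}$ fix a parameter ideal $\q\subseteq\m^{\rmg(R)}$ with $g_0(\q;R)=r_d(R)$. If $e_0(\m;R)=1$ then, $R$ being unmixed, $R$ is regular by \cite[Theorem 40.6]{Nag62}, hence Cohen-Macaulay, and we are done. So assume $e_0(\m;R)>1$; then $\m I^n=\m\q^n$ for all $n$ by \cite[Proposition 2.3]{GoS03}. Consequently $\m I^{n+1}=\m\q^{n+1}\subseteq\q^{n+1}$, so $I^{n+1}\subseteq\q^{n+1}:\m$, and therefore
$$\ell_R(I^{n+1}/\q^{n+1})\le\ell_R([\q^{n+1}:\m]/\q^{n+1})$$
for every $n\ge0$.

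Both sides are eventually polynomial of degree $d-1$ in $n$. The right-hand side is the irreducible-multiplicity polynomial, whose coefficient of $\binom{n+d-1}{d-1}$ is $g_0(\q;R)$. The left-hand side equals $\ell_R(R/\q^{n+1})-\ell_R(R/I^{n+1})$; since it is bounded by a polynomial of degree $d-1$ (and since $I\subseteq\overline{\q}$ by \cite[Proposition 2.3]{GoS03}) we have $e_0(\q;R)=e_0(I;R)$, so its coefficient of $\binom{n+d-1}{d-1}$ is $e_1(I)-e_1(\q)$. Comparing leading coefficients in the displayed inequality then gives $e_1(\q:\m)-e_1(\q)\le g_0(\q;R)=r_d(R)$. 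As $R$ is unmixed, $\dim R\ge2$ and $\q\subseteq\m^{\rmg(R)}$, Theorem~\ref{T6.6} yields that $R$ is Cohen-Macaulay, completing the proof. The one step requiring care is this passage from the length inequality to the inequality of coefficients: I must verify that both length functions agree to top order (equal multiplicities $e_0$) and are eventually polynomial of degree $d-1$, so that an inequality holding for all $n\gg0$ descends to the coefficients of $\binom{n+d-1}{d-1}$; once this is granted, everything else is a direct application of the machinery already in place.
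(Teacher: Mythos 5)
Your proposal is correct and takes essentially the same route as the paper: for ${\rm (1)}\Rightarrow{\rm (2)}$ both rest on Proposition \ref{P2.7}, and for ${\rm (2)}\Rightarrow{\rm (1)}$ both dispose of the case $e_0(\m;R)=1$ via \cite[Theorem 40.6]{Nag62}, then use $\m I^n=\m\q^n$ from \cite[Proposition 2.3]{GoS03} to get $\ell(I^{n+1}/\q^{n+1})\le \ell([\q^{n+1}:\m]/\q^{n+1})$ and hence $e_1(\q:\m)-e_1(\q)\le g_0(\q;R)=r_d(R)$, the only (immaterial) difference being that you conclude via Theorem \ref{T6.6} while the paper invokes Theorem \ref{main1} directly, of which Theorem \ref{T6.6} is exactly the unmixed specialization. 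Your explicit justification of the coefficient comparison (equal multiplicities $e_0(I)=e_0(\q)$ via the integral-closure containment) merely spells out what the paper leaves implicit, and is sound.
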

\begin{proof}
	(1) $\Rightarrow$ (2). This is  immediate from Proposition \ref{P2.7} and the definition of sequentially Cohen-Macaulay.
	
	
	(2) $\Rightarrow$ (1). Since, if $R$ is unmixed and $e_0(\fkm;R)=1$, then $R$ is Cohen-Macaulay (\cite[Theorem 40.6]{Nag62}), and it follows that it is sufficient for us to prove this result under the addition hypothesis that $e_0(\fkm;R)>1$.
	
	Since $e_0(\fkm;R)>1$, by Proposition 2.3 in \cite{GoS03}, we obtain $\fkm I^n = \fkm\fkq^n$, for all $n$. Therefore, $I^n\subseteq \fkq^n:\fkm$ for all $n$. Thus, we obtain
	$$\ell(R/\fkq^{n+1})-\ell(R/I^{n+1})=\ell(I^{n+1}/\fkq^{n+1})\le \ell((\fkq^{n+1}:\fkm/\fkq^{n+1}).$$ 
	However, this means that $e_1(I;R)-e_1(\fkq;R)\le g_0(\fkq;R)$. Since $ g_0(\fkq;R)=r_d(R)$, we have $e_1(I;R)-e_1(\fkq;R)\le r_d(R).$ Since $R$ is unmixed, by Theorem \ref{main1}, $R$ is Cohen-Macaulay, as required.
	
\end{proof}

Let us note the following example to illustrate our arguments.

\begin{ex}
 Let $d \geqslant 3$ be an integer and let $U = k[[X_1,X_2, \ldots ,X_d, Y ]]$ be the
formal power series ring over a field $k$. We look at the local ring
$R = U/[(X_1,X_2, \ldots ,X_d) \cap (Y )]$.
Then $R$ is a reduced ring with $\dim R = d$. Moreover, $R$ is sequentially Cohen-Macaulay.
We put $A = U/(Y )$ and $D =
U/(X_1,X_2,\ldots ,X_d)$. 
Let $\fkq$ be a parameter ideal in $R$. Then, since $D$ is a DVR and $A$ is a
regular local ring with $\dim A = d$, thanks to the exact sequence $0\to D \to R \to A \to 0$,
 we get that $\depth R = 1$ and  the sequence
$$\xymatrix{0\ar[r]&D/\fkq^{n+1}D\ar[r]&R/\fkq^{n+1}R\ar[r]&A/\fkq^{n+1}A\ar[r]&0}$$
is exact. By applying the functor
$\mathrm{Hom}_R(R/\frak m, \bullet)$ we obtain the following exact sequence
$$\xymatrix{0\ar[r]&[\fkq^{n+1}:_D\fkm]/\fkq^{n+1}\ar[r]&[\fkq^{n+1}:_R\fkm]/\fkq^{n+1}\ar[r]&[\fkq^{n+1}:_A\fkm]/\fkq^{n+1}\ar[r]&0}.$$
Therefore, we have
$$\begin{aligned}
\ell_R([\fkq^{n+1}:_R\fkm]/\fkq^{n+1})&=\ell_R([\fkq^{n+1}:_A\fkm]/\fkq^{n+1})+\ell_R([\fkq^{n+1}:_D\fkm]/\fkq^{n+1})
&=\binom{d-1+n-1}{d-1}+ 1,
\end{aligned}$$
for all integers $n \geqslant 0$, whence $f_0(\fkq;R)=1=r_d(R)$ for every parameter ideal $\fkq$ in $\fkm$.
Since $A$ is regular local ring and $d\ge 3$, we have 
$$e_1(\q:\m)-e_1(\q)=r_d(R).$$ 
Notice that $R$ is not a Cohen-Macaulay ring, since $H^1_\fkm(R) = H^1_\fkm(D)$ is not a finitely generated $R$-module,
where $\fkm$ denotes the maximal ideal in $R$.

\end{ex}

\begin{bibdiv}
\begin{biblist}

\bib{CuC07}{article}{
      author={Cuong, D.~T.},
      author={Cuong, N.~T.},
       title={On sequentially cohen-macaulay modules},
        date={2007},
     journal={Kodai Math. J.},
      volume={30},
       pages={409\ndash 428},
}

\bib{CGT13a}{article}{
      author={Cuong, N.~T.},
      author={Goto, S.},
      author={Truong, H.~L.},
       title={The equality {$I^2=Q I$} in sequentially {C}ohen-{M}acaulay
  rings},
        date={2013},
     journal={Journal of Algebra},
      volume={379},
       pages={50\ndash 79},
}

\bib{CGT13}{article}{
      author={Cuong, N.~T.},
      author={Goto, S.},
      author={Truong, H.~L.},
       title={Hilbert coefficients and sequentially {C}ohen-{M}acaulay
  modules},
        date={2013},
     journal={J. Pure Appl. Algebra},
      volume={217},
       pages={470\ndash 480},
}

\bib{CHV98}{article}{
      author={Corso, A.},
      author={Huneke, C.},
      author={Vasconcelos, W.~V.},
       title={On the integral closure of ideals},
        date={1998},
     journal={Manuscripta Math.},
      volume={95},
       pages={331\ndash 347},
}

\bib{CuQ11}{article}{
      author={Cuong, N.~T.},
      author={Quy, P.~H.},
       title={A splitting theorem for local cohomology and its applications},
        date={2011},
     journal={J. Algebra},
      volume={331},
       pages={512\ndash 522},
}

\bib{CQT15}{article}{
      author={Cuong, N.~T.},
      author={Quy, P.~H.},
      author={Truong, H.~L.},
       title={On the index of reducibility in noetherian modules},
        date={2015},
     journal={J. Pure Appl. Algebra},
      volume={219},
       pages={4510\ndash 4520},
}

\bib{CuT08}{article}{
      author={Cuong, N.~T.},
      author={Truong, H.~L.},
       title={Asymptotic behaviour of parameter ideals in generalized
  cohen-macaulay module},
        date={2008},
     journal={J. Algebra},
      volume={320},
       pages={158\ndash 168},
}

\bib{GGHOPV10}{article}{
      author={Ghezzi, L.},
      author={Goto, S.},
      author={Hong, J.~Y.},
      author={Ozeki, K.},
      author={Phuong, T.~T.},
      author={Vasconcelos, W.~V.},
       title={The first hilbert coefficients of parameter ideals},
        date={2010},
     journal={J. Lond. Math. Soc.},
      volume={81},
      number={2},
       pages={679\ndash 695},
}

\bib{Gro51}{article}{
      author={Gr{\"{o}}bner., W.},
       title={Ein irreduzibilit{\"{a}}tskriterium f{\"{u}}r
  {P}rim{\"{a}}rmideale in kommutativen ringen},
        date={1951},
     journal={Monatsh. Math.},
      volume={55},
       pages={138\ndash 145},
}

\bib{GoS03}{article}{
      author={Goto, S.},
      author={Sakurai, H.},
       title={The equality {$I^{2} = QI$} in {B}uchsbaum rings},
        date={2003},
     journal={Rend. Semin. Mat. Univ. Padova},
      volume={110},
       pages={25\ndash 56},
}

\bib{GoS84}{article}{
      author={Goto, S.},
      author={Suzuki, N.},
       title={Index of reducibility of parameter ideals in a local ring},
        date={1984},
     journal={J. Algebra},
      volume={87},
       pages={53\ndash 88},
}

\bib{Mat86}{book}{
      author={Matsumura, H.},
       title={Commutative ring theory},
   publisher={Cambridge Univ. Press},
     address={Cambridge},
        date={1986},
}

\bib{Nag62}{book}{
      author={Nagata, M.},
       title={Local rings},
   publisher={Interscience New York},
        date={1962},
}

\bib{Noe21}{article}{
      author={Noether, E.},
       title={Idealtheorie in ringbereichen},
        date={1921},
     journal={Math. Ann.},
      volume={83},
       pages={24\ndash 66},
}

\bib{Nor57}{article}{
      author={Northcott, D.~G.},
       title={On irreducible ideals in local rings},
        date={1957},
     journal={J. London Math. Soc.},
      volume={32},
       pages={82\ndash 88},
}

\bib{Sch99}{article}{
      author={Schenzel, P.},
       title={On the dimension filtration and {Cohen-Macaulay} filtered
  modules},
        date={1999},
     journal={Van Oystaeyen, Freddy (ed.), Commutative algebra and algebraic
  geometry, New York: Marcel Dekker. Lect. Notes Pure Appl. Math.},
      volume={206},
       pages={245\ndash 264},
}

\bib{Sta96}{book}{
      author={Stanley, R.~P.},
       title={Combinatorics and commutative algebra},
   publisher={Birkh\"{a}user Boston},
        date={1996},
}

\bib{Tru13}{article}{
      author={Truong, H.~L.},
       title={Index of reducibility of distinguished parameter ideals and
  sequentially cohen-macaulay modules},
        date={2013},
     journal={Proc. Amer. Math. Soc.},
      volume={141},
       pages={1971\ndash 1978},
}

\bib{Tru14}{article}{
      author={Truong, H.~L.},
       title={Index of reducibility of parameter ideals and {C}ohen-{M}acaulay
  rings},
        date={2014},
     journal={J. Algebra},
      volume={415},
       pages={35\ndash 49},
}

\bib{Tru17}{article}{
      author={Truong, H.~L.},
       title={Chern coefficients and {C}ohen-{M}acaulay rings},
        date={2017},
     journal={J. Algebra},
      volume={490},
       pages={316\ndash 329},
}

\bib{Tru19}{article}{
      author={Truong, H.~L.},
       title={The eventual index of reducibility of parameter ideals and the
  sequentially cohen-macaulay property},
        date={2019},
     journal={Arch. Math. (Basel)},
      volume={112},
      number={5},
       pages={475\ndash 488},
}

\bib{TaT20}{article}{
      author={Tam, N. T.~T.},
      author={Truong, H.~L.},
       title={A note on {C}hern coefficients and {C}ohen-{M}acaulay rings},
        date={2020},
     journal={Arkiv f{\"o}r Matematik},
      volume={58},
      number={1},
       pages={197\ndash 212},
}

\bib{Vas08}{article}{
      author={Vasconcelos, W.~V.},
       title={The chern cofficient of local rings},
        date={2008},
     journal={Michigan Math},
      volume={57},
       pages={725\ndash 713},
}

\end{biblist}
\end{bibdiv}

\end{document}